\documentclass{article}

\usepackage{arxiv}

\usepackage[utf8]{inputenc} 
\usepackage[T1]{fontenc}    
\usepackage{hyperref}       
\usepackage{url}            
\usepackage{booktabs}       
\usepackage{amsfonts}       
\usepackage{nicefrac}       
\usepackage{microtype}      
\usepackage{doi}
\usepackage{style}
\usepackage{enumitem}

\usepackage[mode=buildnew]{standalone}
 
\title{Hydro-mechanical Model for Slope Stability Assessment: A polygonal stabilization-free discretization}

\author{ \href{https://orcid.org/0000-0001-8642-4258}{\includegraphics[scale=0.06]{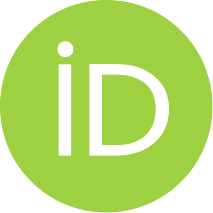}\hspace{1mm}Stefano~Berrone} \\
	Dipartimento di Scienze Matematiche\\
	``G. L. Lagrange''\\
	Politecnico di Torino, Italy \\
	\texttt{stefano.berrone@polito.it} \\
    \And
	\href{https://orcid.org/0000-0001-9548-5300}{\includegraphics[scale=0.06]{orcid.pdf}\hspace{1mm}Francesca~Marcon} \\
	Dipartimento di Scienze Matematiche\\
	``G. L. Lagrange''\\
	Politecnico di Torino, Italy \\
	\texttt{francesca.marcon@polito.it} \\
	\And
	\href{https://orcid.org/0000-0002-8540-3639}{\includegraphics[scale=0.06]{orcid.pdf}\hspace{1mm}Gioana~Teora} \\
	Dipartimento di Scienze Matematiche\\
	``G. L. Lagrange''\\
	Politecnico di Torino, Italy \\
	\texttt{gioana.teora@polito.it} \\
}

\renewcommand{\shorttitle}{Hydro-mechanical Model for Slope Stability Assessment}

\hypersetup{
pdftitle={Hydro-mechanical Model for Slope Stability Assessment: A polygonal stabilization-free discretization},
pdfsubject={q-bio.NC, q-bio.QM},
pdfauthor={Stefano~Berrone, Francesca~Marcon, Gioana~Teora},
pdfkeywords={},
}

\begin{document}
\maketitle

\begin{abstract}
Rainfall-induced landslides are governed by the interaction between subsurface water flow and soil mechanics, requiring robust numerical methods for the simulation of variably saturated porous media. In this work, we consider a semi-coupled hydro-mechanical model based on Richards' equation and linear elasticity and propose a numerical framework based on a stabilization-free Virtual Element Method for its spatial discretization. The proposed approach naturally accommodates general polygonal meshes while avoiding problem-dependent stabilization terms, whose design may become challenging when heterogeneous and strongly non-linear coefficients are involved. The approach is combined with a mass-lumping technique to improve stability in the treatment of the storage term and with Nitsche's method to weakly impose seepage-face and infiltration boundary conditions, allowing for the automatic switching between Neumann and Dirichlet conditions. Time integration is performed using the backward Euler scheme, while non-linearities are handled through a Picard iteration. Numerical experiments demonstrate the stability and robustness of the proposed methodology and show its effectiveness in simulating rainfall infiltration and evaluating slope stability through the Local Factor of Safety.
\end{abstract}

\keywords{Polygonal mesh; stabilization-free; infiltration; seepage; soil-stability}


\section{Introduction}

Landslides are geological processes involving the downslope movement of soil and rock masses, often triggered by intense or prolonged rainfall events that increase soil saturation and reduce slope stability \cite{Abbasov2024}. Rainwater infiltration modifies both the unit weight of the soil and the pore water pressure, thereby altering the stress distribution within hill-slopes and reducing the available shear strength \cite{MoradiHuisman2024}. In particular, an increase in pore water pressure decreases matric suction and effective stress, leading to a reduction in soil cohesion and, consequently, to a higher susceptibility to slope failure \cite{Moradi2018}.

To investigate these processes, hydro-mechanical multi-physics models are commonly employed, coupling subsurface flow and soil mechanics \cite{Lu2012,Moradi2018}. Since soils are generally unsaturated or partially saturated, water flow is modelled using Richards' equation, which combines Darcy's law, mass conservation, and constitutive relationships linking saturation and permeability to the pressure-head \cite{Abbasov2024,Moradi2018}. The mechanical behaviour is described by the linear momentum equilibrium equation together with a linear elasticity constitutive relation. The resulting mathematical model describes the quasi-static consolidation of variably saturated porous media, commonly referred to as unsaturated poro-elasticity \cite{Both2021}.

Slope stability is commonly assessed through the Local Factor of Safety ($\rm{LFS}$), a Coulomb stress-field-based indicator defined as the ratio between the Coulomb stress for the potential failure state and the Coulomb stress for the current state of stress under the Mohr-Coulomb criterion \cite{Lu2012}. The $\rm{LFS}$ is evaluated pointwise throughout the computational domain and depends on the effective stress tensor, soil cohesion, and internal friction angle. Unlike traditional limit-equilibrium approaches, the $\rm{LFS}$ does not require prior assumptions regarding the geometry or location of the failure surface. Consequently, it can be naturally computed on unstructured meshes, providing detailed information on the onset of instability and on the spatial distribution of potential failure zones \cite{Moradi2018,Abbasov2024,Lu2012}.

Among the numerical techniques available for slope stability analysis, the Finite Element Method (FEM) is one of the most widely adopted because of its efficiency, versatility, and relatively low computational cost \cite{LinZheng2020}. Although FEM-based approaches have been extensively developed and refined, their applicability to highly complex geometries remains challenging, and the accuracy of the numerical solution is often sensitive to mesh quality. Moreover, generating high-quality meshes may become particularly difficult for domains with intricate geometrical features, such as stony soil slopes, where rock blocks of different sizes and shapes are randomly distributed throughout the soil matrix \cite{Teora2024,GrappeinTeora2025}.

An attractive alternative is provided by the Virtual Element Method (VEM), which naturally accommodates general polygonal meshes, including non-convex elements, hanging nodes, collapsing nodes, and other more complex configurations \cite{SunLin2020}. This flexibility greatly simplifies the discretization of complex geometries while preserving good approximation properties. Owing to these advantages, VEM has been successfully applied to a wide range of problems, including linear elasticity \cite{DaVeiga2015}, fracture mechanics \cite{Benedetto2018,Berrone2013}, contact problems \cite{Wriggers2016}, poro-mechanics \cite{BurgerBaier2020}, and stony soil modelling \cite{SunLin2020,GrappeinTeora2025}. Nevertheless, standard VEM formulations require the introduction of problem-dependent stabilization terms to recover coercivity, making their application to strongly non-linear problems, such as Richards' equation, considerably more challenging \cite{GrappeinTeora2025}.

To overcome this limitation, we employ a stabilization-free Virtual Element Method (SFVEM) \cite{Marcon2024} for the spatial discretization of the non-linear semi-coupled hydro-mechanical model. The method is combined with a mass-lumping strategy \cite{Paulino2020}, which eliminates the need for stabilization terms also in the storage contribution while mitigating spurious oscillations at the infiltration front. Furthermore, seepage-face and infiltration boundary conditions (BCs) are imposed through Nitsche's method \cite{Mika2009,TagliabueQuarteroni2016}, allowing for the automatic transition between Neumann and Dirichlet boundary conditions according to the local hydraulic state \cite{Dolejsi2019}. Time discretization is performed using the backward Euler scheme, whereas the non-linearities arising from Richards' equation are handled through a Picard iterative procedure.

The outline of the paper is as follows. Section \ref{sec:hydromechanical_problem} introduces the hydro-mechanical model, defines the Local Factor of Safety, and presents the seepage-face and infiltration boundary conditions. Section \ref{sec:numerical_discretization} describes the proposed numerical framework, which combines the stabilization-free Virtual Element Method, the backward Euler time discretization, and the Picard iterative scheme for the solution of the hydro-mechanical problem. The same section also discusses the post-processing procedure adopted to compute the Local Factor of Safety. Section \ref{sec:nitshce_method} presents the Nitsche's formulation for the treatment of seepage-face and infiltration boundary conditions and provides the theoretical analysis establishing the stability of the resulting spatial discretization. Finally, Section \ref{sec:numerical_experiments} reports several numerical experiments designed to assess each component of the proposed methodology and to demonstrate its reliability and robustness in the simulation of semi-coupled hydro-mechanical problems.


\section{The hydro-mechanical model}\label{sec:hydromechanical_problem}

The following section presents the required mass and momentum balance equations that govern variably saturated flow in hill-slopes and couple hydraulic and mechanical processes.

We consider a poro-elastic medium occupying the open, connected, and bounded domain $\Omega \subset \R^2$ with Lipschitz boundary $\Gamma \coloneq \partial \Omega$. Let $\finaltime > 0$ denote the final time and $(0,\finaltime)$ denote the time interval of interest. Under the assumption of infinitesimal deformations of the skeleton, the poro-elastic medium can be approximated as fixed in time \cite{Both2021}. Let us denote by $Q_{\finaltime} := \Omega \times (0,\finaltime)$ the space-time domain. Let $\xx \in \Omega$, then we write $\xx = (x, z)$.

In the following, the notation ($[\cdot]$) denotes the physical dimension of the quantity that precedes it. We use $L$, $M$, and $T$ to represent the dimensions of length, mass, and time, respectively, while $[-]$ denotes a dimensionless quantity.

We consider the Richards' equation \cite{Richards1931} to model water flow, 
i.e.
\begin{equation}
\begin{cases}
    \frac{\partial \wc(\ph)}{\partial t} + \nabla \cdot \left(-K(\ph) (\nabla\ph + \ee_z)\right) = f & \text{in } Q_{\finaltime},\\
    \ph = 0 & \text{on } \Gamma_D^{\ph}\times (0, \finaltime),\\
    K(\ph)(\nabla \ph + \ee_z) \cdot \nn = 0 & \text{on } \Gamma_N^{\ph}\times (0, \finaltime),\\
    K(\ph)\left(\nabla \ph + \ee_z\right) \cdot \nn + \gamma(\ph) \ph = G_N + \gamma(\ph) g_D & \text{on } \Gamma_R^{\ph} \times (0, \finaltime),\\
    \ph(\cdot, 0) = \ph_0 & \text{in } \Omega,
\end{cases}
\label{eq:richards}
\end{equation}
where $\ph\ [L]$ is the water pressure-head, $\wc\ [-]$ is the volumetric water content, $f\in \leb{2}{Q_{\finaltime}}\ [T^{-1}]$ represents the sink/source term, the symbol $z\ [L]$ denotes the vertical coordinate with the $z$-axis direction $\ee_z$ oriented against the gravity direction, $K\ [LT^{-1}]$ represents the hydraulic conductivity, and $\ph_0 \in \leb{2}{\Omega} \ [L]$ is the initial condition. Moreover, we consider homogeneous Dirichlet and Neumann boundary conditions on $\Gamma^{\ph}_D$ and $\Gamma^{\ph}_{N}$, respectively,
and a time-varying boundary parts on $\Gamma_R^{\ph}$, such that these parts are mutually
disjoint and $\Gamma_N^{\ph} \cup \Gamma_D^{\ph} \cup \Gamma_R^{\ph} = \Gamma$. On the non-homogeneous time-varying boundary, we prescribe suitable boundary conditions represented by functions $g_D \in \leb{2}{0,\finaltime; \sob{1/2}{\Gamma^{\ph}_R}} \ [L]$, and $G_N \in \leb{2}{0,\finaltime; \leb{2}{\Gamma^{\ph}_R}}  \ [LT^{-1}]$. In particular, this time-varying boundary condition is used here to prescribe seepage and infiltration boundary conditions (see Section \ref{sec:seepage} for further details). 
On $\Gamma^{\ph}_D$ and $\Gamma^{\ph}_{N}$, we assume homogeneous boundary conditions for the pressure-head, for simplicity. However, more complex BCs can also be incorporated into the model with standard techniques.
Finally, $\gamma = \gamma(\ph)\ [T^{-1}]$ acts as a permeability coefficients for the surface $\Gamma^{\ph}_R$ that depends on the soil saturation. We remark that the term ``time-varying'' indicates that the type of boundary condition changes in time, switching between Neumann and Dirichlet conditions according to the value of $\gamma$, whereas the boundary geometry is time-independent.

Moreover, we consider the following assumptions to hold true.

\begin{assum}[\cite{List2016, Dolejsi2019}]
Let us assume the following assumptions on function coefficients:
    \begin{enumerate}[label={(A\arabic*)}]
        \item The water content $\wc = \wc(\ph)$ is Lipschitz continuous and monotonically non-decreasing with derivative $C(\ph) \coloneq \frac{\partial \wc(\ph)}{\partial \ph} \geq 0$, commonly referred to as the capacity term. If $\ph > 0$, then $C(\ph) = 0$, and the Richards' equation degenerates (\textit{fast-diffusion} type of degeneracy). This is a common situation when considering the infiltration process, as the ones considered in Section \ref{sec:numerical_experiments}.
        \item The hydraulic conductivity $K=K(\ph)$ is a positive, non-decreasing, Lipschitz continuous, and there exist two constants $K_m$ and $K_M$ such that
        \begin{equation*}
            0 < K_m \leq K(z) \leq K_M < \infty\quad \forall z \in \R.
        \end{equation*}
        Typically, for $\ph \rightarrow -\infty$, $K(\ph) \rightarrow 0$ and the Richards' equation degenerates (\textit{slow-diffusion} type of degeneracy).
    \end{enumerate}
\end{assum}

Let us introduce the effective degree of saturation $\effsat\ [-]$
\begin{equation}
    \effsat(\ph) = \frac{\wc(\ph) - \wc_r}{\wc_s - \wc_r},
    \label{eq:effsat}
\end{equation}
where $\wc_r\ [-]$ and $\wc_s\ [-]$ are the residual and the saturated volumetric water content, respectively.

In the present manuscript, two different experimental laws are used to describe the soil water retention curves, i.e. $K=K(\ph)$ and $\wc=\wc(\ph)$. The first law is based on Mualem and van Genuchten \cite{VanGenuchten1980}, which defines 
\begin{equation}
    \begin{gathered}
\effsat(\ph) = \begin{cases}
\frac{1}{[1 + (a \vert \ph \vert)^n]^m} & \text{if } \ph < 0,\\
1 & \text{if }  \ph \geq 0,
\end{cases}\\
K(\ph) = \begin{cases}
K_s  \effsat(\ph)^{l}\left[1 - \left(1- \effsat(\ph)^{\frac{1}{m}}\right)^m\right]^2 & \text{if } \ph < 0,\\
K_s & \text{if }  \ph \geq 0,
\end{cases}
\end{gathered}
\label{eq:van:parameters}
\end{equation}
where $a\ [L^{-1}]$ represents the inverse of the air entry suction, $n\ [-]$ and $m = 1 - \frac{1}{n}$ depend on the pore size distribution, $K_s\ [LT^{-1}]$ is the saturated hydraulic conductivity and $l\ [-]$ is a tortuosity parameter, usually set equal to $2$. 
The second approach is based on Brooks and Corey \cite{Brooks1964}. It defines
\begin{equation}
    \begin{gathered}
    \effsat(\ph) = \begin{cases}
        \vert \alpha_{BC} h \vert^{-n_{BC}} & \text{if } \alpha_{BC} \ph < -1,\\
        1 & \text{if } \alpha_{BC} \ph \geq -1,
    \end{cases}\\
    K(\ph) = K_s \effsat(\ph)^{l_{BC} + 2 + \frac{2}{n_{BC}}},
\end{gathered}
\label{eq:brooks:parameters}
\end{equation}
where  $\alpha_{BC}\ [L^{-1}]$ is the air entry pressure-head, $n_{BC}\ [-]$ is the soil pore distribution index, and $l_{BC}\ [-]$ is a parameter that depends on tortuosity.

The linear momentum equilibrium equation is expressed as follows \cite{Lu2012,Moradi2018}:
\begin{equation}
    \begin{cases}
        \nabla \cdot \stress(\uu) + \bb(\ph) = \bm{0} & \text{in } Q_{\finaltime},\\
        \stress(\uu) = 2 \mu \strain(\uu) + \lambda \div \uu \II & \text{in } Q_{\finaltime},\\
        \strain(\uu) = \frac{1}{2} (\nabla \uu + (\nabla \uu)^T)& \text{in } Q_{\finaltime},\\
        \uu = \bm{0} & \text{on } \Gamma, \quad \forall t \in (0, \finaltime),
    \end{cases}
    \label{eq:momentum_balance}
\end{equation}
where $\uu\ [L]$ is the displacement, $\strain(\uu)\ [-]$ is the total symmetric strain  tensor, $\stress\ [ML^{-1}T^{-2}]$ is the total stress tensor, $\mu\in\R^+ \ [ML^{-1}T^{-2}]$ and $\lambda\in\R^{+}\ [ML^{-1}T^{-2}]$ are the Lamé coefficients, $\II\ [-]$ represents the second order identity tensor, and $\bb \in\vleb{2}{\Omega}{2}\ [M L^{-2}T^{-2}]$ is a body force vector. We note that the dependence $\bb=\bb(\ph)$ introduces the coupling between the hydraulic and mechanical problems, accounting for the additional loads induced in the soil by variations in water content.

We assume homogeneous boundary conditions for the displacement, for simplicity. However, more complex BCs can also be incorporated into the model with standard techniques.


\subsection{The variational formulation}

Let us introduce the following variational spaces
\begin{gather*}
    \VP \coloneq \{v \in \sob{1}{\Omega}:\ v = 0 \text{ on }{\Gamma_D^{\ph}}\}, \quad  \VM \coloneq \vsob[0]{1}{\Omega}{2}.
\end{gather*}
The variational formulation of the coupled hydro-mechanical model reads as: \textit{Find $\ph \in \leb{2}{0,\finaltime; \VP}$ with $\frac{\partial \ph}{\partial t} \in \leb{2}{0,\finaltime; \VP'}$ and $\uu \in \leb{2}{0, \finaltime; \VM}$ such that}
\begin{equation}
    \begin{cases}
    \begin{aligned}
        \sdual{C(\ph(t))\frac{\partial \ph(t)}{\partial t}}{v} &+ \scal[\Omega]{K(\ph(t)) (\nabla \ph(t) + \ee_z)}{\nabla v} \\
        &\quad - \scal[\Gamma_R^{\ph}]{K(\ph(t)) (\nabla \ph(t) + \ee_z) \cdot \nn}{v} = \scal[\Omega]{f(t)}{v} 
    \end{aligned}
         & \quad \forall v \in \VP \text{ and for a.e. } t \in (0, \finaltime)\\
        \scal[\Omega]{2 \mu \strain(\uu(t))}{\strain(\vv)} + \scal[\Omega]{\lambda \div \uu(t)}{\div \vv} + \scal[\Omega]{\bb(\ph(t))}{\vv} = 0 & \quad \forall \vv \in \VM \text{ and for a.e. } t \in (0, \finaltime)\\
        \ph(0) = \ph_0
    \end{cases}
    \label{eq:var-problem}
\end{equation}
where $\sdual{}{}$ denotes the duality pairing between $\VP$ and its dual space $\VP'$, whereas $\scal[\Omega]{}{}$ denotes the inner product in $\leb{2}{\Omega}$. More precisely, given two scalar functions $p,q \in \leb{2}{\Omega}$, two vector fields $\bm{a},\ \vv \in \vleb{2}{\Omega}{2}$ and two tensor fields $\bm{T},\ \bm{\sigma} \in \vleb{2}{\Omega}{2 \times 2}$, we denote by
\begin{align*}
\scal[\Omega]{p}{q} = \int_{\Omega} p q,\quad\scal[\Omega]{\bm{a}}{\vv} = \int_{\Omega} \bm{a}\cdot \vv,\quad \scal[\Omega]{\bm{T}}{\bm{\sigma}} = \int_{\Omega} \bm{T} : \bm{\sigma}, 
\end{align*}
where $\bm{T} : \bm{\sigma} := \sum_{i,j = 1}^n \bm{T}_{ij} \bm{\sigma}_{ij}$.

\subsection{The local factor of safety}

\begin{figure}[ht!]
    \centering
    \includegraphics[width=.8\textwidth]{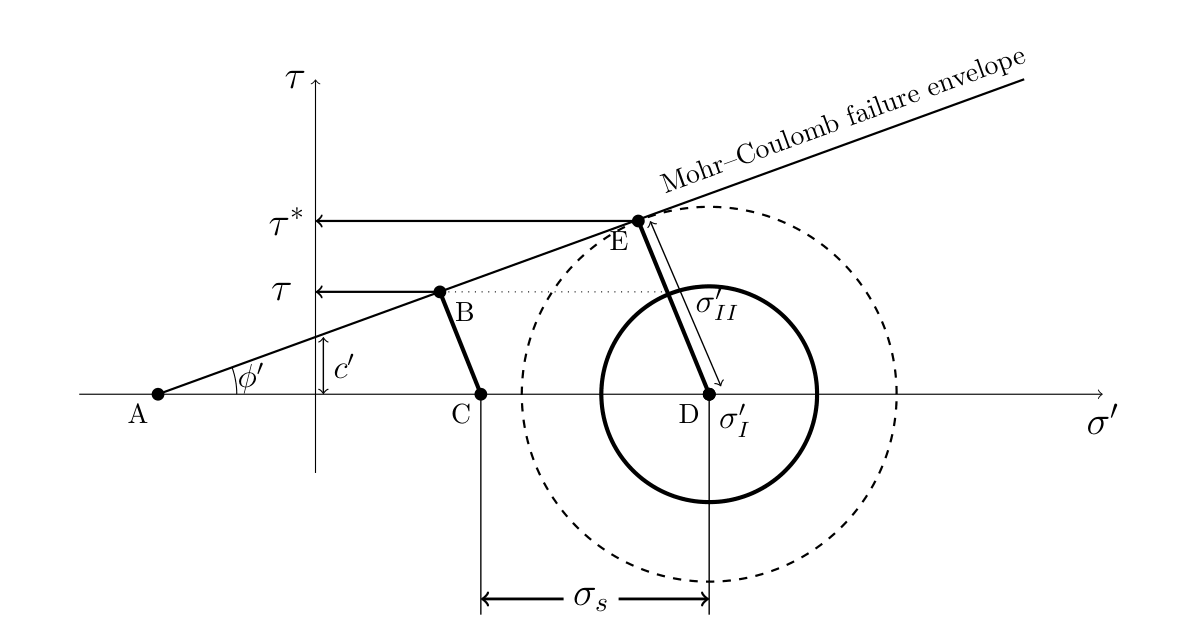}
    \caption{The Mohr circle (black solid circle) and the Mohr-Coulomb failure envelope used to evaluate the $LFS$ \cite{Lu2012}.}
    \label{fig:mohr-coulumb}
\end{figure}

Once the mass and momentum balance equations have been solved, the stability of the variably saturated hill-slope can be evaluated. Here, the Local Factor of Safety ($\rm{LFS}$) approach proposed by \cite{Lu2012} is used for stability evaluation, whose definition is based on the concept of effective stress. The effective stress in variably saturated soils, denoted by $\stress'\ [ML^{-1}T^{-2}]$, is given by Bishop \cite{Lu2010, Moradi2018}:
\begin{equation*}
    \stress' = \stress - p_a \II + \chi(\ph) (p_a - p_w)\II,
\end{equation*}
where $p_a\ [ML^{-1}T^{-2}]$ is the pore air pressure, $p_w\ [ML^{-1}T^{-2}]$ is the pore water pressure, and $\chi\ [-]$ is the Bishop parameter, which can be defined as
\begin{equation*}
    \chi(\ph)   = \begin{cases}
        1 & \ph \geq 0,\\
        \effsat(\ph) & \ph < 0,
    \end{cases} 
\end{equation*}
where $\effsat$ is defined in \eqref{eq:effsat}.
Assuming $p_a = 0$ provided by the surrounding atmospheric air and being $p_w = \rho_w g \ph$, we obtain
\begin{equation}
    \stress' =  \stress - \sigma_s(\ph) \bm{I},
    \label{eq:effective_stress}
\end{equation}
where $\sigma_s(\ph) = \chi(\ph) \rho_w g \ph \ [ML^{-1}T^{-2}]$ is the suction stress, which is always compressible for soils, $\rho_w\ [ML^{-3}]$ is the water density, and $g\ [LT^{-2}]$ is the gravity acceleration.

The computation of Local Factor of Safety helps to quantify, at each point within a slope, the proximity of the current stress state to failure and it is based on the effective stress tensor \eqref{eq:effective_stress}. 
For a linear elastic material, the shear strength and, thus, the Mohr-Coulomb failure envelope is defined as
\begin{equation*}
    \tau = c' + \sigma' \tan (\phi'),
\end{equation*}
where $\tau\ [ML^{-1}T^{-2}]$ is the shear stress, $\sigma'$ represents the normal component of the effective stress, $c'\ [ML^{-1} T^{-2}]$ is the effective soil cohesion, and $\phi'\ [\circ]$ is the effective friction angle. See Figure \ref{fig:mohr-coulumb} for a graphical representation.

More precisely, the LFS is defined as the ratio of the potential Coulomb stress $\tau^{\ast}\ [ML^{-1}T^{-2}]$ to the current state of shear stress $\tau$ in the failure direction, according to the Mohr-Coulomb criterion \cite{Lu2012}. For a given effective stress state, the LFS can be computed geometrically using the Mohr circle representation and the similarity between triangles $ABC$ and $ADE$ in Figure \ref{fig:mohr-coulumb}, yielding 
\begin{equation}
    \rm{LFS} = \frac{\tau^{\ast}}{\tau} = \frac{\cos(\phi') (c' +  \sigma'_{I} \tan(\phi'))}{\sigma_{II}'},
    \label{eq:LFS}
\end{equation}
where $(\sigma_{I}', 0)$ and $\sigma_{II}'$ are the centre and the radius of the Mohr circle, respectively. These quantities can be expressed as
\begin{gather*}
    \sigma_I' = \frac{\sigma_1'  + \sigma_3'}{2} = \frac{\sigma_1  + \sigma_3}{2}  - \sigma_s,\\
        \sigma_{II}' = \frac{\sigma_1'  - \sigma_3'}{2} = \frac{\sigma_1  - \sigma_3}{2}  ,
\end{gather*}
where $\sigma_1'$ and $\sigma_3'$ are the major and minor principal effective stress, while $\sigma_1$ and $\sigma_3$ are the major and minor principal total stress. This relationship follows from Equation \eqref{eq:effective_stress}.

Based on the Mohr-failure criterion,
\begin{itemize}
    \item $\rm{LFS} > 1$ indicates a stable soil,
    \item $\rm{LFS} = 1$ defines the stability threshold, 
    \item $\rm{LFS} < 1$ indicates that the soil may experience a failure.
\end{itemize}

We observe that if, for instance, the body force vector $\bb$ in \eqref{eq:momentum_balance} is independent of $\ph$ and it depends only on the soil self-weight, infiltration processes increase the suction stress $\sigma_s$, causing a leftward shift of the Mohr circle and consequently a reduction of the $\rm{LFS}$.

\subsection{Seepage and infiltration}\label{sec:seepage}

A seepage-face is the boundary between a saturated flow field and the atmosphere, or between a saturated flow field and a stream channel, where water is free to exit from the subsurface. This condition can be formulated in the following form \cite{Scudeler2017}:
\begin{equation*}
    \ph \leq 0,\quad  K(\ph)(\nabla \ph + \ee_z) \cdot \nn \leq 0,\quad \ph(K(\ph)(\nabla \ph + \ee_z) \cdot \nn ) = 0,\qquad \text{on } \Gamma_R^{\ph} \quad \forall t \in (0, \finaltime).
\end{equation*}
This means that if the if water is discharged from the domain (negative entering flux) the pressure-head $\ph$ must be atmospheric. Conversely, whenever the pressure-head is negative $\ph < 0$, no seepage outflow occurs. In other words, we either impose a homogeneous Dirichlet boundary condition ($\ph = g_D = 0$) or a homogeneous Neumann boundary condition ($K(\ph)(\nabla \ph + \ee_z) \cdot \nn = G_N = 0$).

Until water is not accumulated at the soil surface, i.e. no ponding occurs, infiltration (or snowmelt) can be prescribed as a Neumann boundary condition
\begin{equation*}
    K(\ph)(\nabla \ph + \ee_z) \cdot \nn = G_N\quad \text{on } \Gamma_{R}^{\ph} \quad \forall t \in (0, \finaltime),
\end{equation*}
where $G_N = G_N(t)$ represents the infiltration or melt rate at time $t$. Nevertheless, when the infiltration capacity of the soil is exceeded, water starts to accumulate at the surface. In this situation, the pressure-head at the soil surface reaches atmospheric pressure, and the boundary condition becomes pressure-controlled, i.e. this situation can be modelled by switching the Neumann conditions to a Dirichlet condition $\ph{} = g_D = 0  \text{ on } \Gamma_{R}^{\ph}$. These two alternating regimes can be summarized as \cite{Scudeler2017, Park2026}:
\begin{equation*}
    \ph{} \leq 0,\qquad  K(\ph)(\nabla \ph + \ee_z)  \cdot \nn \leq G_N,\quad \ph{} (K(\ph)(\nabla \ph + \ee_z)\cdot \nn - G_N) = 0, \quad \text{on } \Gamma_{R}^{\ph} \quad \forall t \in (0, \finaltime),
\end{equation*}
i.e., if the porous medium is unsaturated at the surface, the imposed flux $G_N$ infiltrates into the soil. Otherwise, if the infiltration capacity of the soil is exceeded, the pressure-head reaches atmospheric pressure $\ph{} = 0$ and the excess of water accumulates at the surface \cite{Diskin1996}. In other words, we either impose a homogeneous Dirichlet boundary condition ($\ph = g_D = 0$) or a non-homogeneous Neumann boundary condition ($K(\ph)(\nabla \ph + \ee_z) \cdot \nn = G_N$).

The easiest way to handle these kinds of conditions can be realized by manually switching between the Neumann and Dirichlet boundary conditions in time, respectively, according to the generic scheme \cite{Dolejsi2019}:
\begin{equation*}
    \begin{aligned}
        &\text{if }\qquad \ph < g_D\quad \text{and}\quad K(\ph)(\nabla \ph + \ee_z) \cdot \nn > G_N\qquad \text{then}\qquad K(\ph)(\nabla \ph + \ee_z) \cdot \nn = G_N,\\
        &\text{if }\qquad \ph < g_D\quad \text{and}\quad K(\ph)(\nabla \ph + \ee_z) \cdot \nn < G_N\qquad \text{then}\qquad K(\ph)(\nabla \ph + \ee_z) \cdot \nn = G_N,\\
        &\text{if }\qquad \ph > g_D\quad \text{and}\quad K(\ph)(\nabla \ph + \ee_z) \cdot \nn > G_N\qquad \text{then}\qquad K(\ph)(\nabla \ph + \ee_z) \cdot \nn = G_N,\\
        &\text{if }\qquad \ph > g_D\quad \text{and}\quad K(\ph)(\nabla \ph + \ee_z) \cdot \nn < G_N\qquad \text{then}\qquad \ph = g_D,\\
    \end{aligned}
\end{equation*}
for a given value of the pressure-head $g_D$ and a given flux $G_N$. However, this approach may be computationally demanding.

Thus, to effectively and automatically switch from Dirichlet to Neumann boundary conditions over $\Gamma_R^{\ph}$, and vice versa, we consider the following time-varying boundary conditions:
\begin{equation}
    K(\ph)(\nabla \ph + \ee_z) \cdot \nn + \gamma(\ph) \ph = G_N + \gamma(\ph) g_D\qquad \text{on } \Gamma_R^{\ph} \quad \forall t \in (0, \finaltime).
    \label{eq:robin_condition}
\end{equation}
We notice that in the limit $\gamma \to 0$, Equation \eqref{eq:robin_condition} tends to the pure Neumann BC of Richards' equation, while we recover the pure Dirichlet BC in the limit $\gamma \to \infty$.
The parameter $\gamma$ depends on the pressure-head and can be defined as:
\begin{equation*}
    \gamma(\ph) = \frac{r(\ph)}{1 - r(\ph)},
\end{equation*}
where 
\begin{equation*}
    r(\ph(\xx,t)) = \begin{cases}
        1 & \text{if } \ph \geq g_D\quad \text{and}\quad K(\ph)(\nabla \ph + \ee_z)\cdot \nn \leq G_N\quad (\xx,t) \in \Gamma_R^{\ph} \times (0,\finaltime),\\
        0 & \text{otherwise}.
    \end{cases}
\end{equation*}
However, if $r$ takes only the discrete values $\{0, 1\}$, then the corresponding non-linear algebraic equation often fails to converge \cite{Dolejsi2019}. To avoid this unpleasant phenomenon, the transition from $r = 0$ to $r = 1$ has to be smooth. In \cite{Dolejsi2019}, it has been proposed to choose
\begin{equation*}
    r(\ph) = r_{\mathrm{ph}}(\ph) r_{\mathrm{fl}}(\ph),
\end{equation*}
where
\begin{gather}
    r_{\mathrm{ph}}(\ph) = \begin{cases}
        0 & \text{if } \ph < g_D, \\
        \left(\frac{\ph - g_D}{s_{\mathrm{ph}}}\right)^2\left(3 - 2 \frac{\ph - g_D}{s_{\mathrm{ph}}}\right) & \text{if } \ph \in [g_D, g_D + s_{\mathrm{ph}}] ,\\
        1 & \text{if } \ph > s_{\mathrm{ph}},
    \end{cases}\\
    r_{\mathrm{fl}}(\ph) = \begin{cases}
        1 & \text{if } K(\ph)(\nabla \ph + \ee_z) \cdot \nn < G_N, \\
        1-\left(\frac{K(\ph)(\nabla \ph + \ee_z) \cdot \nn - G_N}{s_{\mathrm{fl}}}\right)^2\left(3 - 2 \frac{(K(\ph)(\nabla \ph + \ee_z) \cdot \nn - G_N)}{s_{\mathrm{fl}}}\right) & \text{if } K(\ph)(\nabla \ph + \ee_z) \cdot \nn \in [G_N, G_N + s_{\mathrm{fl}}] ,\\
        0 & \text{if } K(\ph)(\nabla \ph + \ee_z) \cdot \nn > s_{\mathrm{fl}} + G_N,
    \end{cases}
    \label{eq:nitshce:ratio_terms}
\end{gather}
$s_{\mathrm{ph}}$ and $s_{\mathrm{fl}}$ should scale as the diameter of the element. In particular, $s_{\mathrm{fl}}$ may be zero, but $s_{\mathrm{ph}}$ has to be positive.

\section{Numerical discretization}\label{sec:numerical_discretization}

In this section, we introduce the numerical discretization of problem \eqref{eq:var-problem}. In particular, here we adopt a stabilization-free virtual element method for the discretization in space alongside the Nitsche's method to handle time-varying BC and a backward Euler scheme for time discretization. Finally, a Picard scheme is adopted here to deal with the non-linearities of the Richards' equation.

\subsection{The stabilization-free virtual element method}

Let $\Th$ denote a conforming polygonal tessellation of $\Omega$ and $E\in\Th$ denote a generic polygon. Let $h_E$ denote the diameter of an element $E\in\Th$ and
$h:=\max_{E\in\Th} h_E$. 
We assume that
$\Th$ satisfies the standard virtual element mesh assumptions (see, for instance,
\cite{Beirao2017,Brenner2018}), i.e. $\exists \kappa > 0$ such that
\begin{enumerate}[label=\textbf{A.\arabic*}]
\item\label{meshA_1} for all $E\in \Th$, $E$ is star-shaped with respect to a
  ball of radius $\rho\geq \kappa h_E$;
\item\label{meshA_2} for all edges $e\subset \partial E$, the length $h_e:=|e|$ is such that $h_e \geq \kappa h_E$.
\end{enumerate}
For any given $E\in\Th$ and integer $k \geq 0$, let $\Poly{k}{E}$ be the space of polynomials of degree
up to $k$ defined on $E$.

Let $\Proj[\nabla]{1}{E}:\sob{1}{E}\rightarrow\Poly{1}{E}$ be the
$\sob{1}{E}$-orthogonal projection defined up to a constant by the
orthogonality condition:
\begin{equation}\label{eq:PiNablaorthogonalitycondition}
  \forall \,v\in\sob{1}{E}, \quad  \scal[E]{\nabla\left(\Proj[\nabla]{1}{E} v -v \right)}{\nabla p}=0 \;\; \forall \, p\in\Poly{1}{E}.
\end{equation}
To uniquely define $\Proj[\nabla]{1}{E}$, we further set $P_0(\Proj[\nabla]{1}{E}  v - v) = 0$, where
\begin{equation}
    P_0(v) \coloneq \frac{1}{\Nv} \sum_{i=1}^{\Nv} v(\bm{X}_i)\,,
    \label{eq:defP0}
\end{equation}
and $\bm{X}_i$ denotes the $i$-th vertex of $E$.

For any given $E\in\Th$, the local virtual element space of order $1$ for the pressure-head variable $\ph$ is defined as:
\begin{align}
  \Vspace[E]{1}:=\{ v_h\in \sob{1}{E} :\; &\Delta v_h\in \Poly{1}{E}, \;\;  v_h\in \Bk{1}{\partial E},\,\\
  \label{eq:enhancement}&\scal[E]{v_h}{p} =\scal[E]{\Proj[\nabla]{1}{E} v_h}{p} \; \forall p\in\Poly{1}{E}
  \}\,,
\end{align}
where $\Bk{1}{\partial E} := \{v_h\in\cont{\partial E}: v_{h|e}\in \Poly{1}{e}\;\forall e \subset \partial E\}$.

We recall that the degrees of freedom of this space are the values of functions at the vertices of $E$ (see \cite{Ahmad2013,LBe16}). Moreover, we define the global virtual element discrete space as
\begin{equation*}
  \Vspace{1} := \{v_h\in\VP \cap \con{0}{\overline{\Omega}}\colon v_{h|E} \in \Vspace[E]{1}\}\,.
\end{equation*}

Given these definitions, the local and the global vector-valued virtual element space for the displacement variable $\uu$ are defined as 
\begin{equation*}
\bsV[E]{1} = [\Vspace[E]{1}]^2, \quad \bsV{1}:= \{\bs{v}_h\in\mathbf{V}\colon \bs{v}_{h|E}\in\bsV[E]{1}\}\,.
\end{equation*}

Now, we introduce the stabilization-free scheme defined in \cite{Marcon2024}.
Given $\ell_E \in \mathbb{N}$, let
$\projhat{\ell_E}{E}\nabla: \sob{1}{E} \rightarrow \curl \Poly{\ell_E+1}{E} $
be the $\leb{2}{E}$-projection operator of the gradient of functions in
$\sob{1}{E}$, defined, $\forall v\in \sob{1}{E}$, by the orthogonality
condition
\begin{equation}
  \scal[E]{\projhat{\ell_E}{E} \nabla v}{\curl p} = \scal[E]{\nabla v}{\curl p}  \quad\forall p \in \Poly{\ell_E+1}{E}\,,
  \label{eq:PiZeroLGrad orthogonality condition}
\end{equation}
where for any $p\in \Poly{\ell_E+1}{E}$,
$ \curl p = \left(\frac{\partial p}{\partial z},- \frac{\partial p}{\partial
    x}\right)$.  Notice that for any $p\in\Poly{\ell_E+1}{E}$, $\curl p=\boldsymbol{0}$
if and only if $p$ is the constant polynomial.
\begin{remark}[Computation of $\projhat{\ell_E}{E} \nabla$]
For each function $v_h\in \Vspace[E]{1}$, the above projection is computable given the degrees of freedom of $v_h$. Indeed, $\forall p\in\Poly{\ell_E+1}{E}$ we
have, thanks to known results about De Rham diagrams in Sobolev spaces, see e.g. \cite{Boffi2013},
\begin{equation}
  \scal[E]{\nabla v_h}{\curl p}
  = \scal[\partial E]{\nabla v_h\cdot\boldsymbol{t}^{\partial E}}{p}\,,
    \label{eq:projOntheBoundary}
\end{equation}
where $\tt^{\partial E}$ denotes the unit tangent vector to the boundary of the element $E$.
\end{remark}
Moreover, for any given degree $s\geq 0$ let $\Proj{s}{E}:\leb{2}{E}\to\Poly{s}{E}$ be the $\leb{2}{}$-orthogonal projection operator, defined for any $v\in\leb{2}{E}$ such that
\begin{equation*}
    \scal[E]{\Proj{s}{E} v-v}{p} = 0 \; \forall p\in\Poly{s}{E}\,.
\end{equation*}
For any function $v_h\in\Vspace[E]{1}$, the above projector is computable for $s\leq 1$ given the degrees of freedom and the enhancement condition \eqref{eq:enhancement}. Moreover, given the definition of the projector in \eqref{eq:PiZeroLGrad orthogonality condition}, for any function $\bs{v}=(v_x,v_z)\in \vsob{1}{E}{2}$, let $\projhat{\ell_E}{E}\nabla \bs{v}$ be defined such that
\begin{equation*}
    \projhat{\ell_E}{E}\nabla \bs{v} = \left(
        \projhat{\ell_E}{E}\nabla v_x ,\; \projhat{\ell_E}{E}\nabla v_z
    \right)^\intercal,
\end{equation*}
and the discrete counterparts of $\strain (\bs{v})$ and $\div \bs{v}$ are defined as
\begin{equation*}
    \strain_{\ell_E} (\bs{v}) = \frac12 \left( \projhat{\ell_E}{E}\nabla \bs{v} + \left(\projhat{\ell_E}{E}\nabla \bs{v}\right)^T \right)\,,\quad \div_{\ell_E} \bs{v} = \tr{\strain_{\ell_E} (\bs{v})},
\end{equation*}
respectively. Moreover, the vector-valued counterpart of the $\leb{2}{}$-orthogonal projector is given by
\begin{equation*}
    \Proj{1}{E}\bs{v}_h = \left(
        \Proj{1}{E} (v_h)_x ,\; \Proj{1}{E} (v_h)_z 
    \right) ^\intercal\,.
\end{equation*}

We remark that the choice of the degree $\ell_E$ may depend on the geometry of $E$. Hence, it can be different from one polygon to another. We discuss it in more detail in the following section.

\subsection{Semi-discretization in space: The Nitsche's method for dealing with time-varying boundary conditions}\label{sec:nitshce_method}

For the space discretization of the Richards' equation, we introduce a stabilization-free virtual element method approximation based on Nitsche's method \cite{Mika2009, TagliabueQuarteroni2016} to handle the time-varying boundary conditions defined in Equation \eqref{eq:robin_condition}.

Let us denote by $\Eh[R]$ the set of edges $e$ of the tessellation belonging to $\Gamma_R^{\ph}$.
Moreover, let $\ER \in \Th$ be the boundary element having $e$ as an edge, i.e. $e = \ER \cap \Gamma_R^{\ph}$, for each $e \in \Eh[R]$.

Given $\zh \in \Vspace[E]{1}$, let $\ahE{}{}\colon \Vspace[E]{1}\times\Vspace[E]{1} \to \mathbb{R}$, $\mhE{}{}\colon \Vspace[E]{1}\times\Vspace[E]{1} \to \mathbb{R}$, and $\AhE{}{}\colon \bsV[E]{1}\times\bsV[E]{1} \to \mathbb{R}$ be defined as
\begin{gather*}
  \ahE{u_h}{v_h; \zh} \coloneq \scal[E]{K(\Proj{1}{E} \zh) \projhat{\ell_E}{E} \nabla u_h}{\projhat{\ell_E}{E} \nabla v_h},  \\
  \mhE{u_h}{v_h; \zh} \coloneq \scal[E]{C(\Proj{1}{E}\zh)\Proj{1}{E} u_h}{\Proj{1}{E} v_h},\\
  \AhE{\uu_h}{\vv_h} \coloneq \scal[E]{2\mu \strain_{\ell_E}(\uu_h)}{\strain_{\ell_E}(\vv_h)} + \scal[E]{\lambda \div_{\ell_E} \uu_h}{\div_{\ell_E} \vv_h},
\end{gather*}
for any $u_h,v_h \in \Vspace[E]{1}$ and $\uu_h,\vv_h \in \bsV[E]{1}$.

Let $\penalt : \Gamma_R^{\ph} \times (0,\finaltime) \to (0, \infty)$ be a measurable penalty function for which there exist two positive constants $\penalt_0$ and $\penalt_{\infty}$ such that
\begin{equation}
    0 < \penalt_0 \leq \penalt(\xx, t) \leq \penalt_{\infty} < \infty\quad \forall (\xx,t) \in \Gamma_{R}^{\ph}\times(0, \finaltime).
\end{equation}

Given $\zh \in \Vspace{1}$, we define $\ahE[]{\cdot}{\cdot; \zh}\colon \Vspace{1}\times\Vspace{1}\to \mathbb{R}$, $\mhE[]{\cdot}{\cdot;\zh}\colon \Vspace{1}\times\Vspace{1}\to \mathbb{R}$, $\NhE[]{\cdot}{\cdot; \zh}\colon \Vspace{1}\times\Vspace{1} \to \mathbb{R}$, $\FhE[]{\cdot; \zh}\colon \Vspace{1} \to \mathbb{R}$, $\NFhE[]{\cdot; \zh} : \Vspace{1} \to \mathbb{R}$, $\AhE[]{\cdot}{\cdot}\colon \bsV{1}\times\bsV{1} \to \mathbb{R}$, and $\bhE[]{\cdot; \zh}\colon \bsV{1}  \to \mathbb{R}$  as
\begin{align}
    \label{eq:defah} \ahE[]{u_h}{v_h; \zh} &\coloneq \sum_{E\in\Th}\ahE{u_h}{v_h; \zh},\\
     \label{eq:defmh} \mhE[]{u_h}{v_h; \zh} &\coloneq \sum_{E\in\Th}\mhE{u_h}{v_h; \zh},\\
    \nonumber \FhE[]{v_h; \zh} &\coloneq \sum_{E\in\Th}\Big[\scal[E]{f(t)}{\Proj{1}{E}v_h} - \scal[E]{K(\Proj{1}{E}\zh) \ee_z}{ \projhat{\ell_E}{E} \nabla v_h} \Big]\\
    \label{eq:defAh} \AhE[]{\uu_h}{\vv_h} &\coloneq \sum_{E\in\Th}\AhE{\bs{u}_h}{\bs{v}_h},\\
    \nonumber \bhE[]{\vv_h; \zh} &\coloneq \sum_{E\in\Th}\scal[E]{\bb(\Proj{1}{E}\zh)}{\Proj{1}{E}\bs{v}_h},
\end{align}
\begin{align}
    \label{eq:defNhE} \NhE[]{u_h}{v_h; \zh} &\coloneq 
         \displaystyle\sum_{e\in\Eh[R]} \Big[\scal[e]{\frac{-\gamma(\zh) h_e}{\penalt
        + \gamma(\zh) h_e} K(\zh) \projhat{\ell_{\ER}}{\ER}\nabla u_h \cdot \nn}{ v_h} \\
       \nonumber &\qquad+ \scal[e]{u_h}{ \frac{\penalt \gamma(\zh) }{\penalt + \gamma(\ph) h_e}  v_h} \\
       \nonumber &\qquad +\scal[e]{K(\zh) \projhat{\ell_{\ER}}{\ER}\nabla u_h \cdot \nn}{\frac{-h_e}{\penalt + \gamma(\zh) h_e}K(\zh)\projhat{\ell_{\ER}}{\ER}\nabla v_h \cdot \nn} \\
       \nonumber &\qquad + \scal[e]{u_h}{\frac{-\gamma(\zh)h_e}{\penalt + \gamma(\zh) h_e}K(\zh)\projhat{\ell_{\ER}}{\ER}\nabla v_h \cdot \nn} \Big] , 
       \end{align}
       \begin{align}
    \label{eq:defNFhE} \NFhE[]{v_h; \zh} &\coloneq
         \displaystyle\sum_{e\in\Eh[R]} \Big[-\scal[e]{K(\zh) \ee_z \cdot \nn}{\frac{-\gamma(\zh) h_e}{\penalt
        + \gamma(\zh) h_e} v_h} \\
       \nonumber &\qquad -  \scal[e]{K(\zh) \ee_z \cdot \nn}{\frac{-h_e}{\penalt + \gamma(\zh) h_e}K(\zh)\projhat{\ell_{\ER}}{\ER}\nabla v_h \cdot \nn} \\
       \nonumber &\qquad + \scal[e]{G_N}{\frac{\penalt}{\penalt + \gamma(\zh) h_e}  v_h}  + \scal[e]{g_D}{\frac{\penalt \gamma(\zh) }{\penalt + \gamma(\zh) h_e}v_h}\\
       \nonumber &\qquad +\displaystyle\sum_{e\in\Eh[R]} \Big[ \scal[e]{G_N}{\frac{-h_e}{\penalt + \gamma(\zh) h_e}K(\zh)\projhat{\ell_{\ER}}{\ER}\nabla v_h \cdot \nn} \\            
       \nonumber &\qquad + \scal[e]{g_D}{\frac{-h_e \gamma(\zh) }{\penalt + \gamma(\zh) h_e}K(\zh)\projhat{\ell_{\ER}}{\ER}\nabla v_h \cdot \nn}\Big] ,
\end{align}
for any $u_h,v_h \in \Vspace{1}$ and $\uu_h,\vv_h \in \bsV{1}$. 

The semi-discrete approximation of problem \eqref{eq:var-problem} reads as: \textit{Find $\ph_h\in\leb{2}{0,\finaltime;\Vspace{1}}$ with $\frac{\partial \ph_h}{\partial t}\in\leb{2}{0,\finaltime;\Vspace{1}}$ and $\uu_h\in \leb{2}{0,\finaltime; \bsV{1}}$ such that, for a.e. $t\in(0,\finaltime)$,}
\begin{align}
    \label{eq:space-problem}
    \begin{cases}
        \begin{aligned}
            &\mhE[]{\frac{\partial \ph_h(t)}{\partial t}}{v_h; \ph_h(t)} + \ahE[]{\ph_h (t)}{v_h; \ph_h(t)} + \NhE[]{\ph_h (t)}{v_h; \ph_h(t)} \\
            &\qquad \qquad \qquad = \FhE[]{v_h; \ph_h(t)} + \NFhE[]{v_h; \ph_h(t)}
        \end{aligned}
        & \forall v_h \in \Vspace{1},\\
        \AhE[]{\bs{u}_h(t)}{\bs{v}_h} + \bhE[]{\vv_h; \ph_h(t)} = 0
        & \forall \bs{v}_h \in \bsV{1},\\
        \ph_h(0) = \ph_{h}^0,
    \end{cases}
\end{align}
where $\ph_{h}^0$ is the nodal virtual element interpolation of the continuous initial condition $\ph_0$. 

The terms \eqref{eq:defNhE} and \eqref{eq:defNFhE} are consistency terms, while the remaining terms defining $\NhE[]{\cdot }{\cdot; \zh}$ and $\NFhE[]{\cdot; \zh}$ ensure a weak enforcement of the BCs: the larger the penalty term value $\penalt$, the more significant is the penalization on the Dirichlet data \cite{TagliabueQuarteroni2016}.

\begin{lemma}
    If the solution of \eqref{eq:richards}-\eqref{eq:momentum_balance} is such that $(\ph, \uu) \in \Poly{1}{\Omega} \times \VPoly{1}{\Omega}{2}$, then it satisfies the semi-discrete problem \eqref{eq:space-problem}, for a.e. $t\in(0,\finaltime)$.
\end{lemma}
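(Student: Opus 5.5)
Since the solution is piecewise linear globally, $\ph(t)\in\Poly{1}{\Omega}$ and $\uu(t)\in\VPoly{1}{\Omega}{2}$ are admissible discrete functions. Their traces on $\Gamma$ are continuous and piecewise linear, and their Laplacian vanishes. The enhancement condition \eqref{eq:enhancement} holds trivially, because $\Proj[\nabla]{1}{E}p=p$ for every $p\in\Poly{1}{E}$. Hence $\ph(t)\in\Vspace{1}$ and $\uu(t)\in\bsV{1}$. The strategy is to substitute $(\ph,\uu)$ into \eqref{eq:space-problem} and show that every discrete operator acts exactly on linear functions. Each discrete form then coincides with its continuous counterpart in \eqref{eq:var-problem}, up to the projections applied to the test functions, and these projections are removed using orthogonality.

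\textbf{Step 1 (exactness of projectors).} For $p\in\Poly{1}{E}$ we have $\Proj{1}{E}p=p$. Moreover $\nabla p$ is constant, and $\Poly{0}{E}^2\subset\curl\Poly{\ell_E+1}{E}$ for $\ell_E\ge 0$: take $p=a x+b z$ with suitable $a,b$ to generate any constant vector. Therefore $\projhat{\ell_E}{E}\nabla p=\nabla p$, and likewise $\strain_{\ell_E}(\uu)=\strain(\uu)$ and $\div_{\ell_E}\uu=\div\uu$ for $\uu\in\VPoly{1}{E}{2}$. As a consequence, every coefficient evaluated at the projections in the discrete forms equals the exact one: $K(\Proj{1}{E}\ph)=K(\ph)$, $C(\Proj{1}{E}\ph)=C(\ph)$, $\bb(\Proj{1}{E}\ph)=\bb(\ph)$, and $\partial_t\ph\in\Poly{1}{}$.

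\textbf{Step 2 (volume terms).} Next I remove the projections from the test functions. Because $\nabla\ph+\ee_z$ is constant on each $E$, I can write $K(\ph)(\nabla\ph+\ee_z)=\curl q$ locally for some $q$. If $K(\ph)$ is not constant, this needs the right polynomial degree, and here I would use that $\projhat{\ell_E}{E}$ is the $L^2$ projection onto $\curl\Poly{\ell_E+1}{E}$. The cleanest route is to argue that the continuous integrand tested against $\nabla v_h$ equals the same quantity tested against $\projhat{\ell_E}{E}\nabla v_h$ whenever the first factor lies in $\curl\Poly{\ell_E+1}{E}$. This holds if $K(\ph)$ is polynomial of degree at most $\ell_E$ on $E$, which is the implicit requirement. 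Similarly, $\scal[E]{f}{\Proj{1}{E}v_h}$ and $\scal[E]{C\partial_t\ph}{\Proj{1}{E}v_h}$ agree with their unprojected versions as soon as $f$ and $C\partial_t\ph$ lie in $\Poly{1}{E}$; equivalently, since $\ph$ solves the strong equation, the residual $f-C\partial_t\ph$ vanishes because $\div(K(\nabla\ph+\ee_z))$ is handled elementwise. The elastic form then matches exactly. For the body force, $\scal[E]{\bb}{\Proj{1}{E}\vv_h}$ requires $\bb\in\Poly{1}{}$, or is justified in the same way from the strong momentum equation $\div\stress(\uu)+\bb=0$, where $\stress$ is constant and hence $\bb=0$ elementwise. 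I expect this step, removing the projectors on the test side under only the assumption that the solution is linear, to be the main obstacle. The plan is to lean on the strong form of \eqref{eq:richards}--\eqref{eq:momentum_balance}, integrating by parts elementwise so that only boundary fluxes remain.

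\textbf{Step 3 (Nitsche terms).} On each $e\in\Eh[R]$, set $\sigma_n=K(\ph)(\nabla\ph+\ee_z)\cdot\nn$, which is computable exactly by Step 1. The Robin condition \eqref{eq:robin_condition} gives $\sigma_n+\gamma\ph-G_N-\gamma g_D=0$. I will group $\NhE[]{\ph}{v_h;\ph}-\NFhE[]{v_h;\ph}$ into two kinds of terms: those with weight $\frac{\penalt}{\penalt+\gamma h_e}v_h$, which give $-\sigma_n+\ldots$ combined with the consistency flux $-\scal[e]{\sigma_n}{v_h}$ coming from the integration by parts in Step 2, and those with weight $\frac{-h_e}{\penalt+\gamma h_e}K\nabla v_h\cdot\nn$. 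The identity $\frac{-\gamma h_e}{\penalt+\gamma h_e}=\frac{\penalt}{\penalt+\gamma h_e}-1$ lets me check that every group is a multiple of the Robin residual. Each group therefore vanishes, and this yields \eqref{eq:space-problem}. The initial condition holds because the nodal interpolant of a linear $\ph_0$ is $\ph_0$ itself.
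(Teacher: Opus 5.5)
\textbf{There is a genuine gap in Step 2.} Your Steps 1 and 3 are sound and match the paper's proof. That proof uses trial-side exactness of the projectors, then Green's formula, then tests the Robin condition \eqref{eq:robin_condition} against $\frac{\penalt}{\penalt+\gamma h_e}v_h$ and $\frac{-h_e}{\penalt+\gamma h_e}K\projhat{\ell_{\ER}}{\ER}\nabla v_h\cdot\nn$, with $\frac{\penalt}{\penalt+\gamma h_e}-1=\frac{-\gamma h_e}{\penalt+\gamma h_e}$. Your elastic part is also fine: constant stress forces $\bb(\ph)=\bm{0}$, and constant tensors pass from $\projhat{\ell_E}{E}\nabla\vv_h$ to $\nabla\vv_h$.

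Both resolutions you offer in Step 2 are incorrect. Put $\mathbf{q}=K(\ph)(\nabla\ph+\ee_z)$.
\begin{itemize}
\item Every field in $\curl\Poly{\ell_E+1}{E}$ is divergence-free. So $\mathbf{q}|_E\in\curl\Poly{\ell_E+1}{E}$ forces $\nabla\cdot\mathbf{q}=K'(\ph)\nabla\ph\cdot(\nabla\ph+\ee_z)=0$. Requiring $K(\ph)$ to be a polynomial of degree $\le\ell_E$ is not sufficient; that would be the right condition for a projection onto $[\Poly{\ell_E}{E}]^2$, not onto $\curl\Poly{\ell_E+1}{E}$.
\item The strong equation gives $f-C(\ph)\partial_t\ph=-\nabla\cdot\mathbf{q}$, which does not vanish in general.
\end{itemize}
What actually has to be proved is
\[
\sum_{E\in\Th}\Big[(\nabla\cdot\mathbf{q},\Proj{1}{E}v_h-v_h)_E-(\mathbf{q}-\mathbf{q}_E,\nabla v_h)_E\Big]=0\qquad\forall v_h\in\Vspace{1},
\]
where $\mathbf{q}_E$ is the $L^2(E)$-projection of $\mathbf{q}$ onto $\curl\Poly{\ell_E+1}{E}$. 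For non-constant $K$ the second term is in general nonzero. For instance, take $K$ affine with slope $1$ on the range of $\ph=x+c$ (independent of $t$). Then $\nabla\cdot\mathbf{q}=1$, so the source term is harmless. But $\mathbf{q}\neq\mathbf{q}_E$, and on a generic polygon $\nabla v_h$ is not a polynomial, so nothing forces $(\mathbf{q}-\mathbf{q}_E,\nabla v_h)_E$ to vanish. Integrating by parts elementwise does not remove this, because Green's formula needs the unprojected test function.

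The paper's proof has the same structure and does not address this point either. It writes \eqref{eq:lemma1_a} directly ``by Green's formula'' and justifies only the trial-side identities. It thus tacitly replaces $v_h$ and $\nabla v_h$ by $\Proj{1}{E}v_h$ and $\projhat{\ell_E}{E}\nabla v_h$. So the obstacle you flagged is genuine, and the statement needs an extra hypothesis. One such hypothesis is $\mathbf{q}|_E\in\curl\Poly{\ell_E+1}{E}$ for every $E$. This covers $K$ constant on the range of $\ph$ (e.g.\ a saturated state) and the hydrostatic profile $\nabla\ph+\ee_z=\bm{0}$. Under it:
\begin{itemize}
\item $\nabla\cdot\mathbf{q}=0$ gives $f=C(\ph)\partial_t\ph$ pointwise, so the storage term cancels $(f,\Proj{1}{E}v_h)_E$ exactly, without needing $f\in\Poly{1}{E}$.
\item $(\mathbf{q},\projhat{\ell_E}{E}\nabla v_h)_E=(\mathbf{q},\nabla v_h)_E$.
\item Green's formula leaves only the fluxes on $\Eh[R]$: interior edges cancel by continuity of $\mathbf{q}$, $v_h=0$ on $\Gamma_D^{\ph}$, and $\mathbf{q}\cdot\nn=0$ on $\Gamma_N^{\ph}$.
\end{itemize}
Your Step 3 then concludes.
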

\begin{proof}
    First, we notice that
    \begin{equation*}
        \ph = \Proj{1}{E} \ph,\quad \uu = \Proj{1}{E} \uu\quad \text{ and }\quad \nabla \ph = \projhat{\ell_E}{E} \nabla \ph,\quad \strain(\uu) = \strain_{\ell_E}(\uu),\quad \forall \ell_E \geq 0 .
    \end{equation*}
    Multiplying the Richards' equation in \eqref{eq:richards} by $v_h \in \Vspace{1}$ and the momentum equation \eqref{eq:momentum_balance} by $\vv_h \in \bsV{1}$, integrating over $E$, summing over $E \in \Th$ and using Green's formula, we obtain, for a.e. $t\in(0,\finaltime)$:
    \begin{align}
        \label{eq:lemma1_a}&\begin{aligned}
            &\mhE[]{\frac{\partial \ph}{\partial t}}{v_h; \ph} + \ah{\ph}{v_h; \ph} \\
            &\qquad \qquad \qquad- \displaystyle\sum_{e \in\ \Eh[R]}\scal[e]{K(\ph) \Big(\projhat{\ell_{\ER}}{\ER}\nabla \ph  + \ee_z\Big)\cdot \nn}{v_h}  = \FhE[]{v_h; \ph}
        \end{aligned}
        && \forall v_h \in \Vspace{1},\\
        \nonumber &\AhE[]{\bs{u}}{\bs{v}_h} + \bhE[]{\vv_h; \ph} = 0
        && \forall \bs{v}_h \in \bsV{1},\\
        \nonumber &\ph(0) = \ph_{h}^0,
    \end{align}
    by notice that virtual functions $v_h \in \Vspace{1}$ are known linear polynomial over each edge $e$ of the tessellation.
    Next, multiplying the time-varying boundary condition in \eqref{eq:richards} by $\frac{\penalt}{\penalt + \gamma(\ph) h_e} v_h$ with $v_h \in \Vspace{1}$, integrating over an edge $e \in \Eh[R]$ and summing over $e \in \Eh[R]$, we get
    \begin{equation}
    \begin{aligned}
        &\displaystyle\sum_{e\in\Eh[R]} \Big[\scal[e]{K(\ph) \Big(\projhat{\ell_{\ER}}{\ER}\nabla \ph + \ee_z \Big)\cdot \nn}{\frac{\penalt}{\penalt + \gamma(\ph) h_e} v_h} + \scal[e]{\gamma(\ph) \ph}{ \frac{\penalt}{\penalt + \gamma(\ph) h_e}  v_h} \Big] \\
        &\qquad \qquad = \displaystyle\sum_{e\in\Eh[R]} \Big[ \scal[e]{G_N}{\frac{\penalt}{\penalt + \gamma(\ph) h_e}  v_h}  + \scal[e]{\gamma(\ph) g_D}{\frac{\penalt}{\penalt + \gamma(\ph) h_e}v_h}\Big]
    \end{aligned}
    \label{eq:lemma1_b}
    \end{equation}
    Similarly, by multiplying again the time-varying boundary condition  by $\frac{-h_e}{\penalt + \gamma(\ph) h_e}K(\ph)\projhat{\ell_{\ER}}{\ER}\nabla v_h \cdot \nn$ with $v_h \in \Vspace{1}$, we obtain
    \begin{equation}
    \resizebox{0.88\textwidth}{!}{$
    \begin{aligned}
        &\displaystyle\sum_{e\in\Eh[R]} \Big[\scal[e]{K(\ph) \Big(\projhat{\ell_{\ER}}{\ER}\nabla \ph + \ee_z \Big)\cdot \nn}{\frac{-h_e}{\penalt + \gamma(\ph) h_e}K(\ph)\projhat{\ell_{\ER}}{\ER}\nabla v_h \cdot \nn} + \scal[e]{\gamma(\ph) \ph}{\frac{-h_e}{\penalt + \gamma(\ph) h_e}K(\ph)\projhat{\ell_{\ER}}{\ER}\nabla v_h \cdot \nn} \Big] \\
        &\qquad \qquad = \displaystyle\sum_{e\in\Eh[R]} \Big[ \scal[e]{G_N}{\frac{-h_e}{\penalt + \gamma(\ph) h_e}K(\ph)\projhat{\ell_{\ER}}{\ER}\nabla v_h \cdot \nn}  + \scal[e]{\gamma(\ph) g_D}{\frac{-h_e}{\penalt + \gamma(\ph) h_e}K(\ph)\projhat{\ell_{\ER}}{\ER}\nabla v_h \cdot \nn}\Big]
    \end{aligned}$}
    \label{eq:lemma1_c}
    \end{equation}
    Finally, by summing \eqref{eq:lemma1_a}, \eqref{eq:lemma1_b}, and \eqref{eq:lemma1_c}, we obtain the first equation in \eqref{eq:space-problem}.
\end{proof}

Now, we deal with the well-posedness of the semi-discrete problem \eqref{eq:space-problem}. As done in \cite{TagliabueQuarteroni2016}, for our theoretical analysis, we introduce the following $h$-dependent norm: given $\zh \in \Vspace{1}$, we define
\begin{equation}
    \norm[h]{v_h} \coloneq \left(\norm[\VP]{v_h}^ 2 + \sum_{e \in \Eh[R]}\norm[\leb{2}{e}]{\zeta_e(\zh) v_h}^2\right)^{\frac{1}{2}} \quad \forall v_h \in \Vspace{1},
\end{equation}
where
\begin{equation*}
    \zeta_e : \Vspace{1} \to (0, \infty)\quad \text{s.t.}\quad \zh \mapsto \zeta_e(\zh) = \left(\frac{\penalt \gamma(\zh)}{\penalt + h_e \gamma(\zh)} \right)^{\frac{1}{2}}.
\end{equation*}
We observe that, under mesh assumptions \ref{meshA_1}-\ref{meshA_2}, it holds
\begin{equation}
    \label{eq:weight_ineq}\vert \zeta_e(\zh(\xx, t)) \vert^2 \leq C_{\kappa} \frac{\penalt_{\infty}}{h}, \quad \forall  (\xx, t) \in \Gamma_{R}^{\ph} \times (0, \finaltime).
\end{equation}

For the ease of the reader, we report here the following fundamental result that assesses the coercivity and the continuity of the SFVEM discretized bilinear form.
First, we recall the following necessary and sufficient assumption from \cite{Marcon2024,FassinoMarcon2025}.
We assume that
\begin{enumerate}[label=\textbf{A.\arabic*}, start=3]
\item\label{meshA_3} For any $E\in\Th$, let $\ell_E$ be the smallest integer such that any polynomial $\pi_{\ell_E+1} \in \mathcal{P}_{\ell_E+1}(E)$ can be identified by a set of degrees of freedom which contains $\Nv - 1$ distinct moments
$
\frac{1}{|\partial E|}\,(\pi_{\ell_E+1} ,\xi_i)_{\partial E},
$
for a scaled polynomial basis of
$ \Poly[0]{0}{\partial E}
\coloneqq
\left\{
\xi \;:\;
\xi|_e \in \mathbb{P}_{0}(e),\ \forall e \subset \partial E,
\ \int_{\partial E} \xi = 0
\right\}.
$  {We assume that this value of $\ell_E$ exists for any polygon $E\in\Th$.}
\end{enumerate}
\begin{remark}\label{rmk:choiceOfEll}
    From a computational point of view, in \cite{Marcon2024,FassinoMarcon2025} the authors propose an algorithm
to provide the smallest value of $\ell_E$ ensuring the stability, based on a local incremental QR decomposition.
For the sake of completeness, we also remark that in \cite{Marcon2024} the sufficient condition that determines the smallest $\ell_E$ ensuring local stability is also proved theoretically on particular classes of polygons.
\end{remark}
\begin{lemma}
\label{lemma:stabfree_ineq}
Under the assumptions \ref{meshA_1}, \ref{meshA_2}, 
 \begin{equation}\label{eq:ah-continuity}
        \norm[\ell]{v_h} \coloneq \left(\displaystyle\sum_{E \in \Th}\norm[\leb{2}{E}]{\projhat{\ell_E}{E}\nabla v_h}^2\right)^{\frac{1}{2}}  \leq \norm[\VP]{v_h} \quad \forall v_h \in \Vspace{1}.
\end{equation}
Assuming also
\ref{meshA_3},  
\begin{equation*}
    \exists c_{\ast} > 0:\ \norm[\ell]{v_h} \geq c_{\ast} \norm[\VP]{v_h}\quad \forall v_h \in \Vspace{1},
\end{equation*}
where $c_{\ast}$ is independent of $h$.
\end{lemma}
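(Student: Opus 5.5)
The upper bound follows from the orthogonality of $\projhat{\ell_E}{E}$; the lower bound is proved on each element by combining a kernel characterisation, a finite-dimensional compactness argument and scaling, and then summing over $\Th$.

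\textbf{Upper bound.} By \eqref{eq:PiZeroLGrad orthogonality condition}, $\projhat{\ell_E}{E}\nabla v_h$ is the $\leb{2}{E}$-orthogonal projection of $\nabla v_h$ onto the closed subspace $\curl\Poly{\ell_E+1}{E}$. Hence
\[
\norm[\leb{2}{E}]{\projhat{\ell_E}{E}\nabla v_h}\le\norm[\leb{2}{E}]{\nabla v_h}.
\]
Summing the squares over $E\in\Th$ gives $\norm[\ell]{v_h}\le|v_h|_{\sob{1}{\Omega}}\le\norm[\VP]{v_h}$. This step uses no mesh assumption beyond the definitions.

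\textbf{Kernel characterisation.} For the lower bound I work element by element and then sum. Fix $E$ and suppose $v_h\in\Vspace[E]{1}$ satisfies $\projhat{\ell_E}{E}\nabla v_h=\bm 0$. By \eqref{eq:projOntheBoundary}, this means $\scal[\partial E]{\partial_t v_h}{p}=0$ for all $p\in\Poly{\ell_E+1}{E}$.

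Since $v_h$ is piecewise linear on $\partial E$, the tangential derivative $\partial_t v_h$ is piecewise constant. It also has zero mean on $\partial E$, because $v_h$ is continuous around the closed boundary. Hence $\partial_t v_h\in\Poly[0]{0}{\partial E}$, a space of dimension $\Nv-1$.

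Assumption \ref{meshA_3} states that the $\Nv-1$ moments against a basis of this space are among a unisolvent set for $\Poly{\ell_E+1}{E}$. So the linear map
\[
\Poly{\ell_E+1}{E}\to\R^{\Nv-1},\qquad p\mapsto\Big(\tfrac{1}{|\partial E|}(p,\xi_i)_{\partial E}\Big)_i
\]
is surjective. By duality, the pairing $\scal[\partial E]{\xi}{p}$ is nondegenerate in $\xi\in\Poly[0]{0}{\partial E}$, so the orthogonality above forces $\partial_t v_h=0$. Then $v_h$ is constant on $\partial E$. Because $\Delta v_h\in\Poly{1}{E}$, the enhancement condition gives $\Proj[\nabla]{1}{E}v_h$ constant, so $\Delta v_h=0$ and $v_h$ is constant on $E$. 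Therefore the kernel of $v_h\mapsto\projhat{\ell_E}{E}\nabla v_h$ on $\Vspace[E]{1}$ consists exactly of the constants.

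\textbf{Local inequality and scaling.} On the finite-dimensional space $\Vspace[E]{1}/\R$, both $\norm[\leb{2}{E}]{\projhat{\ell_E}{E}\nabla\cdot}$ and $\norm[\leb{2}{E}]{\nabla\cdot}$ are therefore norms. Hence there is $c_E>0$ with
\[
\norm[\leb{2}{E}]{\projhat{\ell_E}{E}\nabla v_h}\ge c_E\norm[\leb{2}{E}]{\nabla v_h}.
\]
To make $c_E$ independent of $h$, I map $E$ to the reference size $h_E=1$. Both seminorms scale identically in 2D, and the projection commutes with dilation, so $c_E$ depends only on the shape of $E$.

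The main obstacle is bounding $c_E$ below uniformly over all shapes allowed by \ref{meshA_1}–\ref{meshA_2}. Under these assumptions $\Nv$ is bounded and the set of admissible scaled polygons is compact in the vertex coordinates. I will argue that $c_E$ depends continuously on the vertex positions wherever \ref{meshA_3} holds with the same $\ell_E$. I will also use that $\ell_E$ can be taken bounded, at most some $\bar\ell(\Nv)$, since enlarging $\ell_E$ only enlarges $\curl\Poly{\ell_E+1}{E}$ and thus only increases the projection norm. A compactness argument then gives $c_\ast=\min c_E>0$, depending only on $\kappa$, provided \ref{meshA_3} holds on the closure of the admissible shape class. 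This is exactly the delicate point, and I would rely on \cite{Marcon2024,FassinoMarcon2025} for it.

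\textbf{Conclusion.} Summing over $E\in\Th$ gives $\norm[\ell]{v_h}\ge c_\ast|v_h|_{\sob{1}{\Omega}}$. Since $v_h$ vanishes on $\Gamma_D^{\ph}$ (assumed of positive measure), Poincaré's inequality yields $|v_h|_{\sob{1}{\Omega}}\gtrsim\norm[\VP]{v_h}$. Absorbing the Poincaré constant into $c_\ast$ completes the proof.
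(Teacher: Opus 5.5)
The paper does not prove this lemma; it recalls it from \cite{Marcon2024,FassinoMarcon2025}, so your argument has to stand on its own. Your upper bound is correct: $\projhat{\ell_E}{E}\nabla$ is the $L^2(E)$-orthogonal projection onto $\curl\Poly{\ell_E+1}{E}$, hence a contraction. Your element-wise argument is also correct, and it shows why \ref{meshA_3} is necessary and sufficient for local stability. Indeed, \ref{meshA_3} says exactly that no nonzero $\xi\in\Poly[0]{0}{\partial E}$ is $L^2(\partial E)$-orthogonal to all traces of $\Poly{\ell_E+1}{E}$, and $\partial_t v_h$ is such a $\xi$ whenever $\projhat{\ell_E}{E}\nabla v_h=\mathbf{0}$. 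One step is misattributed. $\Proj[\nabla]{1}{E}v_h$ is constant because $v_h\equiv c$ on $\partial E$ (integrate by parts against $\nabla p$). Only then does the enhancement give $(v_h-c,\Delta v_h)_E=0$, i.e.\ $\|\nabla v_h\|_{L^2(E)}^2=0$. So $c_E>0$ on each element, and $c_E$ is invariant under similarities.

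The genuine gap is the $h$-independence of $c_\ast$. Your compactness route cannot deliver it from \ref{meshA_1}--\ref{meshA_3}, even with your proviso that \ref{meshA_3} hold on the closure of the shape class. For fixed $\ell$, surjectivity of $p\mapsto\big((p,\xi_i)_{\partial E}\big)_i$ is an open condition in the vertex coordinates, so the minimal $\ell_E$ is only upper semicontinuous. It can jump up at a limit shape, and as you approach that shape, $c_E$ computed with the smaller $\ell$ used nearby tends to zero.

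A concrete example: split the top side of the unit square into five edges by four extra vertices, so $\Nv=8$ and $\dim\Poly[0]{0}{\partial E}=7$.
\begin{itemize}
\item Quadratics give only five nonconstant moments, so $\ell_E\ge 2$ always.
\item When the four vertices are collinear, a nonzero piecewise constant on the five top edges that is orthogonal to $1,x,x^2,x^3$ annihilates every cubic trace. That shape therefore needs $\ell_E=3$, and \ref{meshA_3} does hold there with $\ell_E=3$.
\item Lifting the top vertices generically by $\epsilon$ restores full rank with cubics, so $\ell_E=2$ for $\epsilon\neq0$.
\end{itemize}
Along $\epsilon\to0$, these shapes satisfy \ref{meshA_1}--\ref{meshA_2} with a fixed $\kappa$ and satisfy \ref{meshA_3} at every stage, including the limit, yet $c_E=O(\epsilon)$. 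Your monotonicity remark runs the wrong way here. A larger $\ell$ improves the constant, but the lemma uses the minimal $\ell_E$, and the remark does not even bound that minimal value.

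What is needed is a quantitative, uniform version of \ref{meshA_3}. For instance, a lower bound $\sigma_\ast$, uniform over $\Th$, on the smallest singular value of the suitably scaled moment matrix would do; so would having only finitely many element shapes up to similarity. With such a bound you do not need compactness. Take $p\in\Poly{\ell_E+1}{E}$ whose $L^2(\partial E)$-projection onto $\Poly[0]{0}{\partial E}$ equals $\xi=\partial_t v_h$. Then $\|\curl p\|_{L^2(E)}\le C\sigma_\ast^{-1}h_E^{-1/2}\|\xi\|_{L^2(\partial E)}$. Combine \eqref{eq:projOntheBoundary} with the standard bound $\|\nabla v_h\|_{L^2(E)}\le C h_E^{1/2}\|\partial_t v_h\|_{L^2(\partial E)}$, which holds under \ref{meshA_1}--\ref{meshA_2} with $C=C(\kappa)$. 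This gives
\[
\|\projhat{\ell_E}{E}\nabla v_h\|_{L^2(E)}\ \ge\ \frac{\|\xi\|_{L^2(\partial E)}^2}{\|\curl p\|_{L^2(E)}}\ \ge\ \frac{\sigma_\ast}{C}\,h_E^{1/2}\|\xi\|_{L^2(\partial E)}\ \ge\ \frac{\sigma_\ast}{C^2}\,\|\nabla v_h\|_{L^2(E)} .
\]
So the $h$-independence of $c_\ast$, for you as for the paper, rests on a hypothesis of this kind taken from the cited works, not on \ref{meshA_1}--\ref{meshA_3} alone.
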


In the following lemma, we prove the coercivity of the bilinear form $\ahE[]{\cdot}{\cdot;\zh} + \NhE[]{\cdot}{\cdot ;\zh}$ in terms of $\norm[h]{}$.
\begin{lemma}
    Given $\zh \in \Vspace{1}$, there exist a constant $\alpha_{\ast} > 0$, independent of $h$, such that
    \begin{equation*}
        \ahE[]{v_h}{v_h; \zh} + \NhE[]{v_h}{v_h; \zh} \geq \alpha_{\ast} \norm[h]{v_h}^2 \quad \forall v_h \in \Vspace{1}\quad \text{and a.e. in }\quad (0, \finaltime).
     \end{equation*}
\end{lemma}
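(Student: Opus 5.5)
We take $u_h=v_h$ in the definition of $\NhE[]{\cdot}{\cdot;\zh}$ and write $w_e \coloneq K(\zh)\projhat{\ell_{\ER}}{\ER}\nabla v_h\cdot\nn$. The four boundary terms then collapse to
\begin{equation*}
\NhE[]{v_h}{v_h;\zh} = \sum_{e\in\Eh[R]}\Big[\norm[\leb{2}{e}]{\zeta_e(\zh) v_h}^2 - 2\scal[e]{\frac{\gamma(\zh)h_e}{\penalt+\gamma(\zh)h_e} w_e}{v_h} - \scal[e]{\frac{h_e}{\penalt+\gamma(\zh)h_e} w_e}{w_e}\Big].
\end{equation*}
The volume part is bounded below using (A2) and Lemma~\ref{lemma:stabfree_ineq}:
\begin{equation*}
\ahE[]{v_h}{v_h;\zh}\geq K_m\norm[\ell]{v_h}^2\geq K_m c_\ast^2\norm[\VP]{v_h}^2 .
\end{equation*}
It remains to control the mixed term and the negative flux term by a fraction of $K_m\norm[\ell]{v_h}^2$ plus a fraction of $\sum_e\norm[\leb{2}{e}]{\zeta_e v_h}^2$. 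Since the boundary weights depend on $\zh$, all estimates must be uniform in $\gamma\in[0,\infty)$.

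For the mixed term, set $\beta_e\coloneq \gamma h_e/(\penalt+\gamma h_e)$, and note that $\beta_e^2/\zeta_e^2 = \gamma h_e^2/(\penalt(\penalt+\gamma h_e))\leq \beta_e\, h_e/\penalt$. By weighted Cauchy--Schwarz and Young,
\begin{equation*}
2\scal[e]{\beta_e w_e}{v_h}\leq \epsilon\norm[\leb{2}{e}]{\zeta_e v_h}^2+\epsilon^{-1}\norm[\leb{2}{e}]{\frac{\beta_e}{\zeta_e} w_e}^2 \leq \epsilon\norm[\leb{2}{e}]{\zeta_e v_h}^2+\epsilon^{-1}\frac{h_e}{\penalt_0}\norm[\leb{2}{e}]{w_e}^2 .
\end{equation*}
The last flux term is bounded similarly, since $h_e/(\penalt+\gamma h_e)\leq h_e/\penalt_0$. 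Both therefore reduce to controlling $h_e\norm[\leb{2}{e}]{w_e}^2$. Because $\projhat{\ell_{\ER}}{\ER}\nabla v_h$ is a polynomial on $\ER$, a polynomial inverse/trace inequality under \ref{meshA_1}--\ref{meshA_2} gives
\begin{equation*}
h_e\norm[\leb{2}{e}]{w_e}^2\leq K_M^2 C_{tr}\norm[\leb{2}{\ER}]{\projhat{\ell_{\ER}}{\ER}\nabla v_h}^2 ,
\end{equation*}
with $C_{tr}$ depending only on $\kappa$ and $\ell_{\ER}$. Summing over $e$, and using that each element owns a bounded number of edges, this is bounded by $C K_M^2\norm[\ell]{v_h}^2$.

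Collecting the estimates with $\epsilon=1/2$ gives
\begin{equation*}
\ahE[]{v_h}{v_h;\zh}+\NhE[]{v_h}{v_h;\zh}\geq \Big(K_m - C\frac{K_M^2}{\penalt_0}(1+\epsilon^{-1})\Big)\norm[\ell]{v_h}^2+\frac12\sum_e\norm[\leb{2}{e}]{\zeta_e v_h}^2 .
\end{equation*}
This is where the main obstacle lies. Positivity of the first coefficient requires $\penalt_0$ large relative to $C K_M^2/K_m$, as in standard Nitsche analyses such as \cite{TagliabueQuarteroni2016}. I would state this explicitly as a sufficiently-large-penalty condition, which is standard. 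I would also check whether the specific weights here give a better algebraic bound: for instance, combining the two flux terms with weight $1/(\penalt+\gamma h_e)$ might give a cancellation, but I expect a lower bound on $\penalt_0$ is still needed. With this condition in place, Lemma~\ref{lemma:stabfree_ineq} converts $\norm[\ell]{v_h}^2$ into $c_\ast^2\norm[\VP]{v_h}^2$, which yields $\alpha_\ast=\min\{c_\ast^2(K_m-CK_M^2(1+\epsilon^{-1})/\penalt_0),1/2\}$. This constant is independent of $h$, of $\zh$ and of $t$, because all bounds are uniform in $\gamma$.
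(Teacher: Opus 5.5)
Your proposal is correct and follows essentially the same route as the paper's proof. Both expand $\NhE[]{v_h}{v_h;\zh}$ into the same three terms, apply a $\zeta_e$-weighted Young inequality to the mixed term and the polynomial inverse trace inequality to the flux terms, and conclude with Lemma~\ref{lemma:stabfree_ineq}; the paper's proof likewise closes only under a largeness condition on the penalty (its admissible range for $\delta$ is nonempty only when $\penalt_0$ is large compared with $K_M^2 C_{\rm{tr}}/c_\ast$), so the extra hypothesis you flag is genuinely needed even though the lemma does not state it. Your pointwise identity $\beta_e^2/\zeta_e^2=\beta_e h_e/\penalt\le h_e/\penalt_0$ is slightly sharper bookkeeping: it avoids the detour through \eqref{eq:weight_ineq} and the resulting $\penalt_\infty/\penalt_0$ dependence, and your volume constant $K_m c_\ast^2$ is tracked more carefully than in the paper.
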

\begin{proof}
To show the coercivity of the bilinear form, we first focus on its boundary terms:
\begin{align}
    \label{eq:coerc:first_term_AN}\NhE[]{v_h}{v_h; \zh} &=  \sum_{e \in \Eh[R]}\Big[-2\scal[e]{ \frac{\gamma(\zh) h_e}{\penalt + \gamma(\zh) h_e}K(\zh) \projhat{\ell_{\ER}}{\ER}\nabla v_h \cdot \nn}{v_h}  \\
    \label{eq:coerc:sec_term_AN}&\qquad+ \scal[e]{\frac{\penalt \gamma(\zh)}{\penalt + \gamma(\zh) h_e}v_h}{v_h}\\
    \label{eq:coerc:third_term_AN}&\qquad -\scal[e]{ \frac{ h_e }{\penalt + \gamma(\zh) h_e}K(\zh) \projhat{\ell_{\ER}}{\ER} \nabla v_h \cdot \nn}{K(\zh)\projhat{\ell_{\ER}}{\ER} \nabla v_h \cdot \nn}\Big].
\end{align}
The first term \eqref{eq:coerc:first_term_AN} can be bounded by using the Young inequality: $a b \leq \delta a^2 + \frac{1}{4 \delta} b^2$ for all $\delta > 0$ and $a,b \in \R$. Moreover, by applying the polynomial inverse trace inequality, with constant $C_{\rm{tr}} > 0$, and summing over $e \in \Eh[R]$, we obtain
\begin{align*}
    \sum_{e \in \Eh[e]}2\scal[e]{ \frac{\gamma(\zh) h_e}{\penalt + \gamma(\zh) h_e}K(\zh) \projhat{\ell_{\ER}}{\ER}\nabla v_h \cdot \nn}{v_h} & 
     \leq \sum_{e \in \Eh[e]}\Big[\frac{1}{\delta}K_M^2 \norm[\leb{\infty}{e}]{\frac{1}{\penalt}}^2 \norm[\leb{\infty}{e}]{\zeta_e(\zh)}^2 \left(h_e^2 \norm[\leb{2}{e}]{ \projhat{\ell_{\ER}}{\ER}\nabla v_h \cdot \nn}^2 \right)\\
     & \qquad \qquad +\delta \norm[\leb{2}{e}]{\zeta_e(\zh)v_h}^2 \Big]\\
    & \leq  \frac{1}{\delta}K_M^2 \sum_{e \in \Eh[e]} \norm[\leb{\infty}{e}]{\frac{1}{\penalt}}^2\norm[\leb{\infty}{e}]{\zeta_e(\zh)}^2 \left(h_e C_{\rm{tr}}\norm[\leb{2}{\ER}]{ \projhat{\ell_{\ER}}{\ER}\nabla v_h}^2 \right) \\
    &\qquad \quad +\delta \sum_{e \in \Eh[e]}\norm[\leb{2}{e}]{\zeta_e(\zh)v_h}^2\\
    & \leq \frac{1}{\delta}K_M^2 \frac{1}{\penalt_0^2} \norm[\leb{\infty}{\Gamma_R^{\ph}}]{\zeta_e(\zh)}^2 h C_{\rm{tr}} \norm[\leb{2}{\Omega}]{\nabla v_h}^2  +\delta \sum_{e \in \Eh[e]} \norm[\leb{2}{e}]{\zeta_e(\zh)v_h}^2.
\end{align*}
Concerning the second term \eqref{eq:coerc:sec_term_AN}, we have
\begin{align*}
    \sum_{e \in \Eh[e]}\scal[e]{\frac{\penalt \gamma(\zh)}{\penalt + \gamma(\zh) h_e}v_h}{v_h} &= \sum_{e \in \Eh[e]}\norm[\leb{2}{e}]{\zeta_e(\zh) v_h}^2.
\end{align*}

Finally, by using the polynomial discrete trace inequality and the continuity of the projector, the third term \eqref{eq:coerc:third_term_AN} can be bounded as
\begin{align*}
     \sum_{e \in \Eh[e]}\scal[e]{ \frac{ h_e }{\penalt + \gamma(\zh) h_e}K(\zh) \projhat{\ell_{\ER}}{\ER} \nabla v_h \cdot \nn}{K(\zh)\projhat{\ell_{\ER}}{\ER} \nabla v_h \cdot \nn} & \leq K_M^2 \norm[\leb{\infty}{\Gamma_R^{\ph}}]{\frac{ 1 }{\penalt + \gamma(\zh) h_e}} C_{\mathrm{tr}} \norm[\leb{2}{\Omega}]{ \nabla v_h}^2.
\end{align*}

By combining all together, by using \eqref{eq:weight_ineq}, we obtain
\begin{align*}
    \NhE[]{v_h}{v_h; \zh} &=  \sum_{e \in \Eh[R]}\Big[-2\scal[e]{ \frac{\gamma(\zh) h_e}{\penalt + \gamma(\zh) h_e}K(\zh) \projhat{\ell_{\ER}}{\ER}\nabla v_h \cdot \nn}{v_h}  + \scal[e]{\frac{\penalt \gamma(\zh)}{\penalt + \gamma(\zh) h_e}v_h}{v_h}\\
    &\qquad \quad -\scal[e]{ \frac{ h_e }{\penalt + \gamma(\zh) h_e}K(\zh) \projhat{\ell_{\ER}}{\ER} \nabla v_h \cdot \nn}{K(\zh)\projhat{\ell_{\ER}}{\ER} \nabla v_h \cdot \nn}\Big]  \\
    &\geq\Big[ -\frac{1}{\delta}K_M^2 \frac{1}{\penalt_0^2} \norm[\leb{\infty}{\Gamma_R^{\ph}}]{\zeta_e(\zh)}^2 h C_{\rm{tr}}   - K_M^2 \norm[\leb{\infty}{\Gamma_R^{\ph}}]{\frac{ 1 }{\penalt + \gamma(\zh) h_e}} C_{\mathrm{tr}} \Big] \norm[\leb{2}{\Omega}]{ \nabla v_h}^2\\
    &\qquad \quad+ (1 - \delta)\sum_{e \in \Eh[R]} \norm[\leb{2}{e}]{\zeta_e(\zh) v_h}^2 \\
    &\geq\Big[ -\frac{1}{\delta}K_M^2 \frac{\penalt_{\infty}}{\penalt_0^2} C_{\kappa}  C_{\rm{tr}}   - K_M^2 \frac{1}{\penalt_0} C_{\mathrm{tr}} \Big] \norm[\leb{2}{\Omega}]{ \nabla v_h}^2 + (1 - \delta)\sum_{e \in \Eh[R]} \norm[\leb{2}{e}]{\zeta_e(\zh) v_h}^2. \\
\end{align*}
Thus, by exploiting Lemma \ref{lemma:stabfree_ineq}, we conclude that
\begin{align*}
    \ahE[]{v_h}{v_h; \zh} + \NhE[]{v_h}{v_h; \zh} &\geq\Big[c_{\ast} -\frac{1}{\delta}K_M^2 \frac{\penalt_{\infty}}{\penalt_0^2} C_{\kappa}  C_{\rm{tr}}   - K_M^2 \frac{1}{\penalt_0} C_{\mathrm{tr}} \Big] \norm[\leb{2}{\Omega}]{ \nabla v_h}^2 + (1 - \delta)\sum_{e \in \Eh[R]} \norm[\leb{2}{e}]{\zeta_e(\zh) v_h}^2 \\
    &\geq\alpha_{\ast} \norm[h]{v_h}, \\
\end{align*}
where
\begin{equation*}
    \alpha_{\ast} = \min\Big\{c_{\ast} -\frac{1}{\delta}K_M^2 \frac{\penalt_{\infty}}{\penalt_0^2} C_{\kappa}  C_{\rm{tr}}   - K_M^2 \frac{1}{\penalt_0} C_{\mathrm{tr}}, 1 - \delta\Big\},
\end{equation*}
which is positive, provided that
\begin{equation*}
    \frac{\penalt_{\infty} C_{\kappa} C_{\rm{tr}} K_M^2}{\penalt_0 (\penalt_0 c_{\ast} - K_M^2 C_{\rm{tr}})} < \delta < 1.
\end{equation*}
This concludes the proof.
\end{proof}

Let us introduce the Lagrange basis functions $\{\varphi_i\}_{i=1}^{\Ndof[\ph]}$ w.r.t. the nodal degrees of freedom for the space $\Vspace{1}$, with $\Ndof[\ph] = \dim \Vspace{1}$, and the Lagrange basis functions $\{\vvarphi_i\}_{i=1}^{\Ndof[\uu]}$ for the space $\bsV{1}$, with $\Ndof[\uu] = \dim \bsV{1}$. Given $\zh \in \Vspace{1}$, let us introduce the following quantities, for a.e. $t \in (0, \finaltime)$
\begin{align*}
    & \mathbf{M}_{\ph}(\zh) \in \R^{\Ndof[\ph] \times \Ndof[\ph] }: && [\mathbf{M}_{\ph}(\zh)]_{ij} = \mhE[]{\varphi_i}{\varphi_j; \zh},\\
    & \mathbf{A}_{\ph}(\zh) \in \R^{\Ndof[\ph] \times \Ndof[\ph]}: && [\mathbf{A}_{\ph}(\zh)]_{ij} = \ahE[]{\varphi_i}{\varphi_j; \zh} + \NhE[]{\varphi_i}{\varphi_j; \zh},\\
    & \mathbf{A}_{\uu} \in \R^{\Ndof[\uu] \times \Ndof[\uu] }: && [\mathbf{A}_{\uu}]_{ij} = \AhE[]{\vvarphi_i}{\vvarphi_j},\\
    & \mathbf{g}_{\ph}(\zh) \in \R^{\Ndof[\ph]}: && [\mathbf{g}_{\ph}(\zh)]_{i} = -\left(\NFhE[]{\varphi_i; \zh} + \FhE[]{\varphi_i; \zh}\right), \\
    & \mathbf{g}_{\uu}(\zh) \in \R^{N^{\dof}_{\uu}}: && [\mathbf{g}_{\uu}(\zh)]_{i} =  \bhE[]{\vvarphi_i; \zh}.
\end{align*}

We rewrite \eqref{eq:space-problem} in matrix form as:
\begin{equation}
    \label{eq:matrix-space-problem}
    \begin{cases}
        \mathbf{M}_{\ph}(\ph_h(t)) \frac{d \ph_h(t)}{d t} + \mathbf{A}_{\ph}(\ph_h(t)) \ph_h(t)   + \mathbf{g}_{\ph}(\ph_h(t)) = 0,\\
        \mathbf{A}_{\uu} \uu_h(t) + \mathbf{g}_{\uu}(\ph_h(t)) = 0,\\
        \ph_h(0) = \ph_{h}^0,
    \end{cases}
\end{equation}
where we adopt the same symbols $\ph_h(t)$ and $\uu_h(t)$ to denote both the unknown functions and the vectors gathering the function degrees of freedom, denoted respectively as $\{\ph_i(t)\}_{i=1}^{\Ndof[\ph]}$ and $\{\uu_i(t)\}_{i=1}^{\Ndof[\uu]}$.

\subsection{The full discrete problem}\label{sec:full-discrete-problem}

\begin{figure}[!ht]
    \centering
   \begin{tikzpicture}[node distance=2cm]
   \node (dec1) [decision] {At step $n$};
    \node (start) [startstop, below of=dec1] {Solve Richards' equation \eqref{eq:matrix-space-time-fluid-problem} };
   \node (in1) [startstop, below of=start] {Solve momentum balance \eqref{eq:matrix-space-time-solid-problem}};
   \node (pro1) [process, below of=in1] {Evaluation of Local Factor of Safety as in \eqref{eq:discrete-LFS}};
   \node (dec2) [decision, below of=pro1] {$n = n + 1$};

  \draw [->,thick,line width=1.5pt] (start) -- (in1) node[midway, right] {\large $\ph_{h}^{n}$};
  \draw [->,thick,line width=1.5pt] (in1) -- (pro1) node[midway, right] {\large $\{\ph_{h}^{n}, \uu_h^{n}\}$};
   \draw [->,thick,line width=1.5pt] (dec1) -- (start);
   \draw [->,thick,line width=1.5pt] (pro1) -- (dec2);

   \coordinate (A) at (-4, -8);
   \coordinate (B) at (-4, -4);
   \coordinate (C) at (-4, 0);
   \draw [-,thick,line width=1.5pt] (dec2) -- (A);
   \draw [->,thick,line width=1.5pt] (A) -- (B) node[at end, left] {\large $\ph_{h}^{n-1}$};
   \draw [-,thick,line width=1.5pt] (B) -- (C);
   \draw [-,thick,line width=1.5pt] (dec1) -- (C);
   
  \end{tikzpicture}
    \caption{Workflow for solving the semi-coupled hydro-mechanical problem.}
    \label{fig:discrete-scheme}
\end{figure}

Let us describe the time semi-discretization exploiting the backward Euler scheme.
Given the time interval $[0,\finaltime]$, we subdivide it into $N_{\finaltime}$ time intervals of width $\Delta t^{n} = t^{n} - t^{n-1}$ with
\begin{equation*}
    0:= t_0 < t_1 < \dots, t_{N_{\finaltime}} := \finaltime. 
\end{equation*}
Moreover, given a function $f$ depending on time, we will denote by $f^{n} = f(t^{n})$. Given $\ph_{h}^0$, to solve the problem \eqref{eq:matrix-space-problem}, we adopt the Backward Euler scheme, i.e. for each $n = 1,\dots, N_{\finaltime}$, we solve
\begin{align}
    \label{eq:matrix-space-time-fluid-problem}
    &\mathbf{M}_{\ph}(\ph_h^{n}) \left( \ph_h^{n} - \ph^{n-1}_h \right) + \Delta t^{n}\Big[ \mathbf{A}_{\ph}(\ph_h^{n}) \ph_h^{n}   + \mathbf{g}_{\ph}(\ph_h^{n}) \Big] = 0,\\
    \label{eq:matrix-space-time-solid-problem}&\mathbf{A}_{\uu} \uu_h^{n} + \mathbf{g}_{\uu}(\ph_h^{n}) = 0.
\end{align}

First, we observe that the problem \eqref{eq:matrix-space-time-fluid-problem}-\eqref{eq:matrix-space-time-solid-problem} is semi-coupled. Consequently, following \cite{Lu2012, Moradi2018}, we can first solve the non-linear problem \eqref{eq:matrix-space-time-fluid-problem} in the variable $\ph_h^{n}$, and then solve the linear problem \eqref{eq:matrix-space-time-solid-problem} in the unknown $\uu_h^n$. Finally, given these approximations, we can proceed to compute the Local Factor of Safety \eqref{eq:LFS} as follows: for any $\xx$ in the interior of $E$,
\begin{gather}
    \nonumber \stress_h(\xx) \coloneq 2 \mu \strain_{\ell_E}(\uu_h^{n}(\xx)) + \lambda \div_{\ell_E} (\uu_h^{n}(\xx)) \II,\\
    \label{eq:discrete-LFS}{\rm{LFS}}_h(\xx) = \frac{\cos(\phi') (c' +  \sigma'_{h,I}(\xx) \tan(\phi'))}{\sigma_{h,II}(\xx)'},
\end{gather}
where 
\begin{gather*}
    \sigma_{h,I}'(\xx) = \frac{\sigma_{h,1}(\xx)  + \sigma_{h,3}(\xx)}{2}  - \chi(\Proj{1}{E} \ph_h^n(\xx)) \rho_w g \Proj{1}{E} \ph_h^n(\xx),\\
        \sigma_{h,II}'(\xx) = \frac{\sigma_{h,1}(\xx)  - \sigma_{h,3}(\xx)}{2}  ,
\end{gather*}
$\sigma_{h,1}(\xx)$ and $\sigma_{h,3}(\xx)$ are the major and minor principal of the total stress $\stress_h(\xx)$. 
The full procedure is summarized in Figure \ref{fig:discrete-scheme}.

\subsection{The linearization of Richards' equation}

To obtain the solution of the non-linear discrete Richards' equation \eqref{eq:matrix-space-time-fluid-problem}, an iterative linearization scheme must be taken into account. Following \cite{Celia1990, List2016, STOKKE2023}, the Picard scheme is employed. 


First, we define the non-linear algebraic system to be solved at time step $n=1,\dots,N_{\finaltime}$ and at each non-linear iteration $m \geq 0$:
\begin{equation*}
    \mathbf{F}(\ph_h^{n,m}) = \mathbf{M}_{\ph}(\ph_h^{n,m}) \Big[ \ph_h^{n,m}  - \ph^{n-1}_h \Big] + \Delta t^{n} \Big[\mathbf{A}_{\ph}(\ph_h^{n,m}) \ph_h^{n,m}   + \mathbf{g}_{\ph}(\ph_h^{n,m}) ] = 0.\\
\end{equation*}
Using the first-order Taylor expansion about $\ph_h^{n,m}$ for $\mathbf{F}(\ph_h^{n,m}) = 0$ yields at
\begin{equation*}
    \mathbf{F}(\ph_h^{n,m}) + \mathbf{DF}(\ph_h^{n,m}) \delta_h^{n,m} = 0,\\
\end{equation*}
where in the Picard method $\mathbf{DF}(\ph_h^{n,m}) \in \R^{\Ndof[\ph] \times \Ndof[\ph]}$ reads as:
\begin{equation}
   \mathbf{DF}(\ph_h^{n,m}) = \mathbf{M}_{\ph}(\ph_h^{n,m}) + \Delta t^{n} \mathbf{A}_{\ph}(\ph_h^{n,m}).
   \label{eq:nonlinear-global-matrix}
\end{equation}
Thus, setting $\ph_h^{n,0} = \ph_h^{n-1}$, the Picard scheme can be summarized as follows: for each $m \geq 0$
\begin{align}
    \label{eq:picard-solve}&\text{Solve }\quad &&\hspace{-80pt}\mathbf{DF}(\ph_h^{n,m}) \delta_h^{n,m} = - \mathbf{F}(\ph_h^{n,m}),\\
    \label{eq:picard-compute}&\text{Compute }\quad &&\hspace{-80pt}\ph_h^{n,m + 1} = \delta_h^{n,m} + \ph_h^{n,m},
\end{align}
until the following stopping criteria are satisfied
\begin{equation}
    \norm[2]{\mathbf{F}(\ph_h^{n,m})} \leq \text{tol}_{\text{rr}} \norm[2]{\mathbf{F}(\ph_h^{n,0})} + \text{tol}_{\text{ar}}\quad \text{and}\quad \norm[2]{\delta_h^{n,m}} \leq \text{tol}_{\text{rd}} \norm[2]{\ph_h^{n,0}} + \text{tol}_{\text{ad}},
\end{equation}
for given user-defined tolerance values $\text{tol}_{\text{rr}},\ \text{tol}_{\text{ar}},\ \text{tol}_{\text{ad}}$ and $\text{tol}_{\text{rd}}$, where $\norm[2]{}$ denotes the euclidean norm.

We observe that to solve the discrete Richards' equation with the Picard method, the matrix $\mathbf{DF}(\ph_h^{n,m})$, defined in \eqref{eq:nonlinear-global-matrix}, must be nonsingular. In the previous section, we have shown that the self-stabilized matrix $\mathbf{A}_{\ph}(\ph_h^{n,m})$ is nonsingular. On the other hand, the matrix $\mathbf{M}_{\ph}(\ph^{n,m})$ accounts only for a consistent term \cite{LBe13, LBe16}, and it is not self-stabilized. As observed in \cite{Vacca2015}, due to the presence of $\mathbf{A}_{\ph}(\ph^{n,m})$, the matrix $\mathbf{M}_{\ph}(\ph^{n,m})$ does not really need to be strictly definite positive to obtain a nonsingular matrix $\mathbf{DF}(\ph_h^{n,m})$. Nonetheless, for very small values of $\Delta t^n$, the conditioning of $\mathbf{DF}(\ph_h^{n,m})$ may become very large by not stabilizing the bilinear form $\mhE{}{}$.

A possible way to avoid this issue is introducing a stabilized bilinear form $\widetilde{m}_h^E$, which reads as \cite{Vacca2015}:
\begin{equation}
    \widetilde{m}_h^E(u_h, v_h; \zh) \coloneq \scal[E]{C(\Proj{1}{E} \zh) \Proj{1}{E} u_h}{\Proj{1}{E} v_h} + \beta^E(\zh) \stab[E]{(I - \Proj{1}{E})u_h}{(I - \Proj{1}{E})v_h},
\end{equation}
where $\stab[E]{}{} : \Vspace[E]{1} \times \Vspace[E]{1} \to \R$ is a symmetric definite positive bilinear form that must scale as the mass continuous bilinear form $\scal[E]{}{}$ and $\beta^E \in \R^+$ is a positive constant that depends on $\zh$ and must be tuned to avoid stability issues \cite{PintoreTeora2025}.

To overcome ill-conditioning and the problem-dependent stabilization term, here, instead, we consider the lumped version of the matrix $\mathbf{M}_{\ph}(\zh)$, which is a diagonal matrix whose diagonal entries are defined as:
\begin{align*}
    [\overline{\mathbf{M}}_{\ph}(\zh)]_{ii} &=  \sum_{j=1}^{\Ndof[\ph]} \sum_{E \in \Th}  \widetilde{m}_h^E(\varphi_i, \varphi_j; \zh) =  \sum_{E \in \Th}  \widetilde{m}_h^E(\varphi_i, \sum_{j=1}^{\Ndof[\ph]} \varphi_j; \zh) = \sum_{E \in \Th}  \widetilde{m}_h^E(\varphi_i, 1; \zh), 
\end{align*}
due to partition-of-unity property \cite{Paulino2020, Enabe2025}.
Since $\stab[E]{}{}$ must scale as $\scal[E]{}{}$ and $\Proj{1}{E}$ is $\leb{2}{E}$-orthogonal, we have 
\begin{align*}
    [\overline{\mathbf{M}}_{\ph} (\zh)]_{ii} &= \sum_{E \in \Th} \scal[E]{C(\Proj{1}{E} \zh)\Proj{1}{E} \varphi_i}{1} + \beta^E(\zh) \stab[E]{(I - \Proj{1}{E})\varphi_i}{1} = \sum_{E \in \Th} \int_E C(\Proj{1}{E} \zh) \Proj[\nabla]{1}{E} \varphi_i.
\end{align*}

Using such a definition, we rewrite 
\begin{gather*}
    \mathbf{F}(\ph_h^{n,m}) = \overline{\mathbf{M}}_{\ph}(\ph_h^{n,m})(\ph_h^{n,m}  - \ph^{n-1}_h) + \Delta t^{n} \Big[\mathbf{A}_{\ph}(\ph_h^{n,m}) \ph_h^{n,m}   + \mathbf{g}_{\ph}(\ph_h^{n,m}) \Big] = 0,\\
   \mathbf{DF}(\ph_h^{n,m}) = \overline{\mathbf{M}}_{\ph}(\ph_h^{n,m}) + \Delta t^{n} \mathbf{A}_{\ph}(\ph_h^{n,m}),
\end{gather*}
and solve \eqref{eq:picard-solve}-\eqref{eq:picard-compute} with these new introduced definitions.

\begin{remark}
    We recall that the row sum technique does not guarantee positive nodal masses, even with the linear polynomial space in the VEM \cite{Paulino2020}. A positive nodal mass matrix can instead be obtained by locally defining
    \begin{equation*}
        [\overline{\mathbf{M}}_{\ph}^E (\zh)]_{ii} =  \frac{\vert E \vert }{\tr \mathbf{M}^E_{\ph} (\zh)} [\mathbf{M}^E_{\ph} (\zh)]_{ii}\quad \forall i = 1,\dots ,N^v_E,\quad \forall E \in \Th,
    \end{equation*}
    where $\tr \mathbf{M}_{\ph}^E (\zh)$ denotes the trace of the elemental matrix $\mathbf{M}^E_{\ph} (\zh)$.
\end{remark}

\section{Numerical experiments}\label{sec:numerical_experiments}

In this section, we present four numerical experiments designed to assess the performance of the proposed method.

The first two experiments evaluate the performance of the stabilization-free method for the solution of the Richards' equation \eqref{eq:richards} and the elastic problem \eqref{eq:momentum_balance}, respectively. The Richards problem corresponds to the benchmark problem ``Test 1'' presented in \cite{Mitra2019}, while the elastic problem is based on the benchmark problem ``Test2b'' from \cite{Artioli2017}.

For both experiments, we consider two families of four computational meshes each: a non-uniform quadrilateral mesh, reproducing mesh ``a'' in ``Test2b'' from \cite{Artioli2017}, and a centroid-based Voronoi tessellation, corresponding to mesh ``d'' in the same reference. The finest mesh of each family is displayed in Figure \ref{fig:test1-2:mesh}.

\begin{figure}[!ht]
    \centering
    \begin{subfigure}{0.35\textwidth}
        \includegraphics[width=1\linewidth]{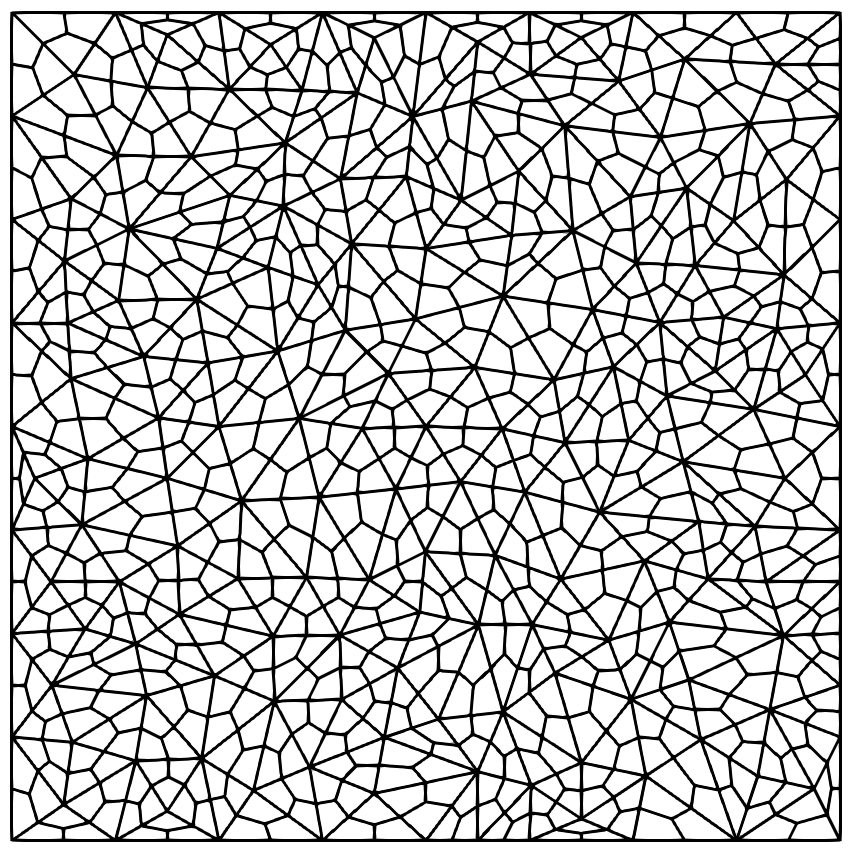}
        \caption{}
    \end{subfigure}
    \begin{subfigure}{0.35\textwidth}
        \includegraphics[width=1\linewidth]{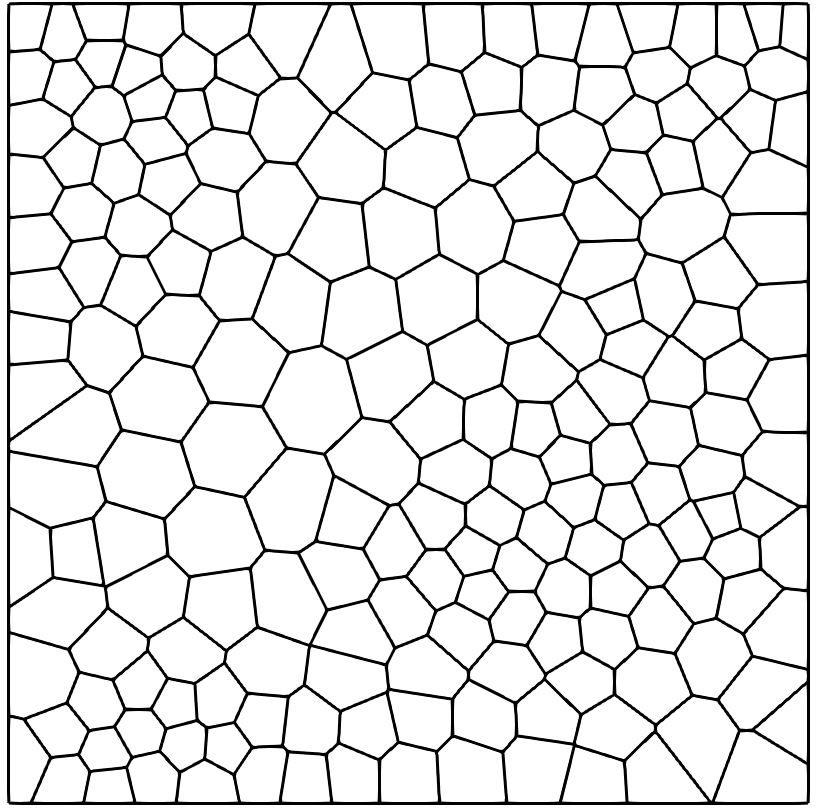}
        \caption{}
    \end{subfigure}
    \caption{Test 1, 2: Last mesh refinement. Left: Non-uniform quadrilateral. Right: Voronoi.}
    \label{fig:test1-2:mesh}
\end{figure}

The third experiment simulates an infiltration scenario, intending to explore the reliability of Nitsche's method.

Finally, in the last experiment, we simulate the hydro-mechanical model to assess the ability of the method to determine soil stability conditions.

All the numerical experiments are carried out using the PolyDiM library \cite{Polydim}.

\subsection{Test 1: Convergence test for Richards' equation}

This benchmark problem follows the ``Test 1'' presented in \cite{Mitra2019} and aims to show the performance of the stabilization-free method applied to the Richards' equation \eqref{eq:richards}. More precisely, we consider $\Omega = (0,1)^2$ and we set $\finaltime = 1$. The water content and the permeability coefficients are defined as
\begin{equation*}
    \wc(\ph) = \begin{cases}
        \frac{1}{(1-\ph)^{1 / 3}} & \text{if } \ph < 0,\\
        1 &\text{if } \ph \geq 0,
    \end{cases}\qquad K(\wc(\ph)) = \wc(\ph)^3 = \begin{cases}
        \frac{1}{(1-\ph)} & \text{if } \ph < 0,\\
        1 &\text{if } \ph \geq 0.
    \end{cases}
\end{equation*}
The forcing term and non-homogeneous Dirichlet boundary conditions are imposed in accordance with the following analytical solution
\begin{equation*}
    \ph(x, z, t) = 1-(1+t^2)(1+x^2+z^2).
\end{equation*}

\begin{figure}[!ht]
    \centering
    \includegraphics[width=0.5\linewidth]{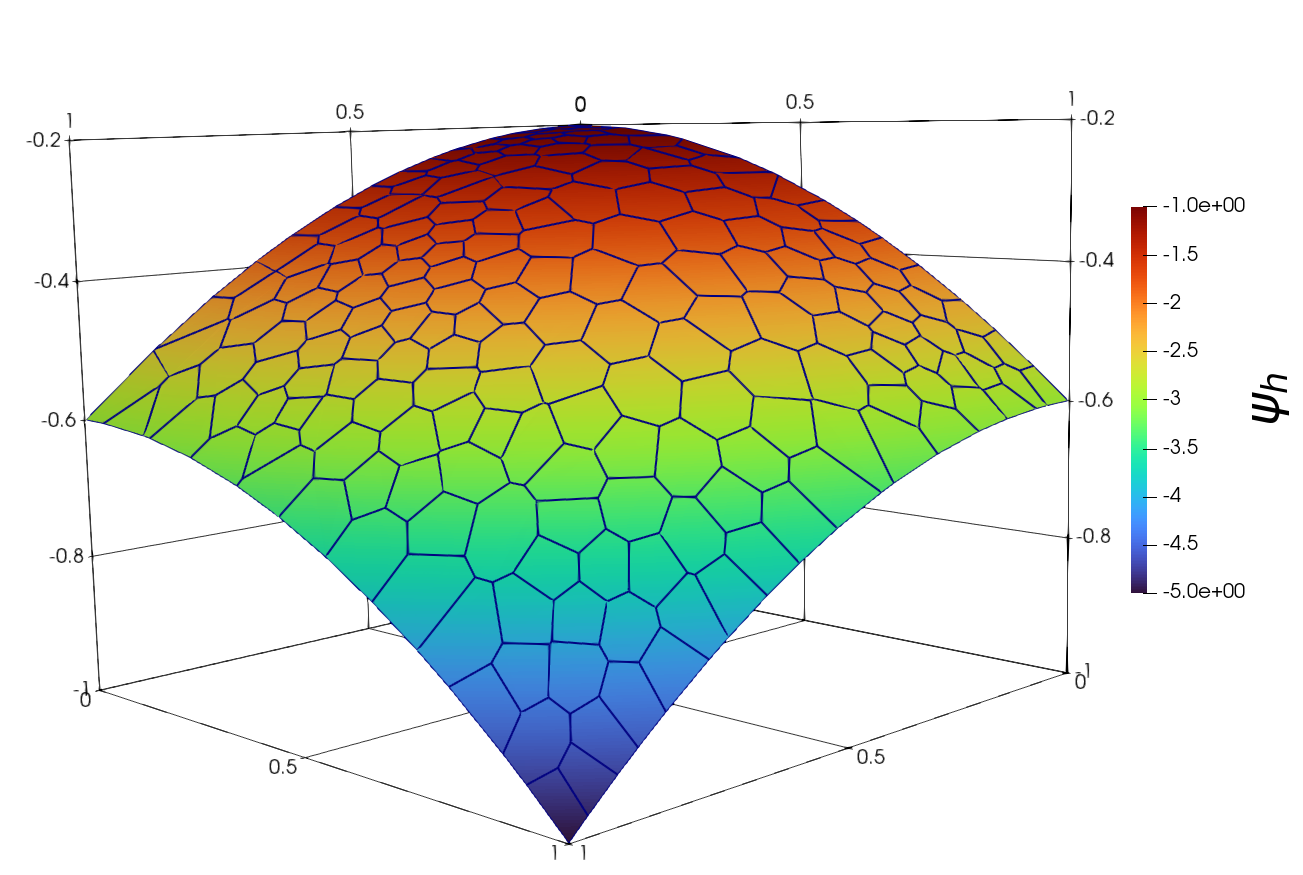}
    \caption{Test 1: Discrete solution computed at the final time over the last refinement of the Voronoi family. The solution in the image is rescaled by a factor of 0.2.}
    \label{fig:test1:solution}
\end{figure}

\begin{figure}[!ht]
    \centering
    \begin{subfigure}{0.35\textwidth}
        \includegraphics[width=1\linewidth]{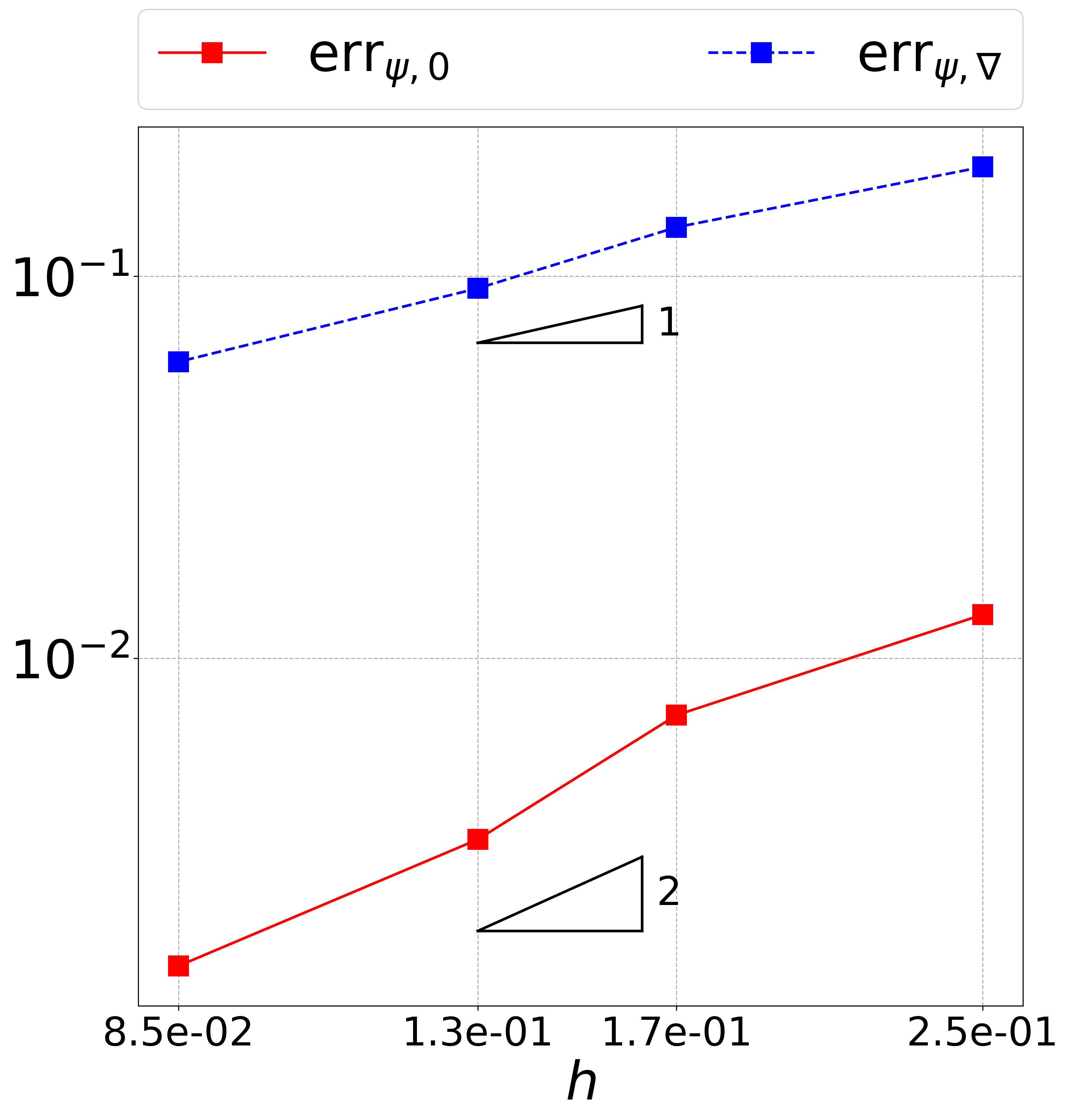}
        \caption{}
    \end{subfigure}
    \begin{subfigure}{0.35\textwidth}
        \includegraphics[width=1\linewidth]{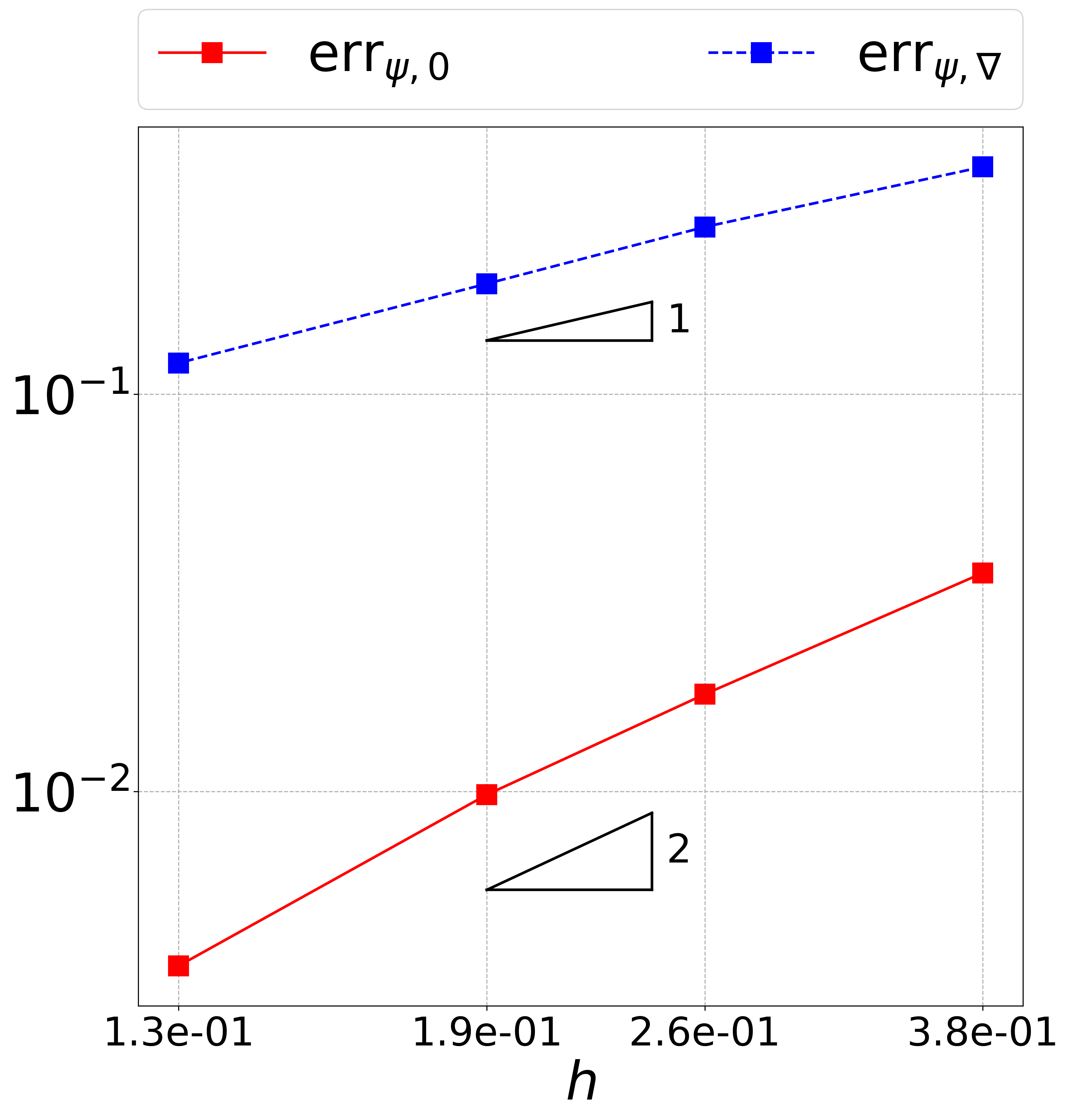}
        \caption{}
    \end{subfigure}
    \caption{Test 1: Behaviour of errors \eqref{eq:richards:errors} as $h$ decreases. Left: Non-uniform quadrilateral. Right: Voronoi.}
    \label{fig:test1:errors}
\end{figure}

To assess the performance of the method introduced in Section \ref{sec:full-discrete-problem}, we evaluate the accuracy of the approximation of the pressure variable $\ph$ by comparing the numerical solution with the exact one at the final time. More precisely, we consider the following discrete errors
\begin{equation}
    \mathrm{err}_{\ph,0}^2 = \sum_{E \in \Th} \norm[\leb{2}{E}]{\Proj{1}{E} \ph_h^{N_{\finaltime}} - \ph}^2 \quad \mathrm{err}_{\ph,\nabla}^2 = \sum_{E \in \Th} \norm[\leb{2}{E}]{\nabla \Proj[\nabla]{1}{E} \ph_h^{N_{\finaltime}} - \nabla \ph}^2,
    \label{eq:richards:errors}
\end{equation}
which measure the errors in the $\leb{2}{}$-norm and in the $H^1$-seminorm, respectively.

The convergence behaviour of these errors is investigated by decreasing the mesh size $h$ and for each considered family of mesh introduced in the previous section. In all the simulations, the time discretization is kept fixed by setting $\Delta t^n=\Delta t=0.01,\ \forall\, n=1,\dots,N_{\finaltime}$
so that the influence of the spatial discretization can be isolated. The errors \eqref{eq:richards:errors} are computed at the final time $\finaltime=1$.

The numerical approximation obtained on the finest mesh of the Voronoi family at the final time is reported in Figure~\ref{fig:test1:solution}, while the corresponding convergence curves of the errors as the mesh size $h$ decreases are displayed in Figure~\ref{fig:test1:errors}. For both mesh families, the numerical results clearly exhibit the expected convergence rates in both the $\leb{2}{}$-norm and the $H^1$-seminorm, thereby confirming the optimal polynomial accuracy of the proposed scheme.

In all the numerical experiments, the stopping criteria for the non-linear iterative solver are chosen as
\begin{equation*}
    \text{tol}_{\mathrm{rr}} = 10^{-10},
    \qquad
    \text{tol}_{\mathrm{ar}} = 10^{-10},
    \qquad
    \text{tol}_{\mathrm{ad}} = 10^{-8},
    \qquad
    \text{tol}_{\mathrm{rd}} = 10^{-8}.
\end{equation*}
With these tolerances, the iterative procedure requires, on average, approximately $8$ iterations to converge at each time step, with no significant dependence on the considered mesh family. This indicates a robust and stable behaviour of the proposed algorithm across different mesh geometries.

\subsection{Test 2: Convergence test for linear elastic problem}

In this test case, we consider the plane strain convergence test corresponding to ``Test2b'' in \cite{Artioli2017}. More precisely, we consider the following boundary value problem with homogeneous Dirichlet boundary conditions:
\begin{equation}
    \begin{cases}
        \nabla \cdot \stress(\uu) + \bb = \bm{0} & \text{in } \Omega,\\
        \stress(\uu) = 2 \mu \strain(\uu) + \lambda \div \uu \II & \text{in } \Omega,\\
        \strain(\uu) = \frac{1}{2} (\nabla \uu + (\nabla \uu)^T)& \text{in } \Omega,\\
        \uu = \bm{0} & \text{on } \Gamma,
    \end{cases}
    \label{eq:pure-elastic}
\end{equation}
where we set $\Omega = (0,1)^2$ and $\lambda = \mu = 1$. In this test, the analytical solution is defined as
\begin{equation*}
    \uu_x = \uu_z = \sin(\pi x) \sin(\pi z),
\end{equation*}
and the load term $\bb$ is derived accordingly to $\uu$. 

\begin{figure}[!ht]
    \centering
    \begin{subfigure}{0.35\textwidth}
        \includegraphics[width=1\linewidth]{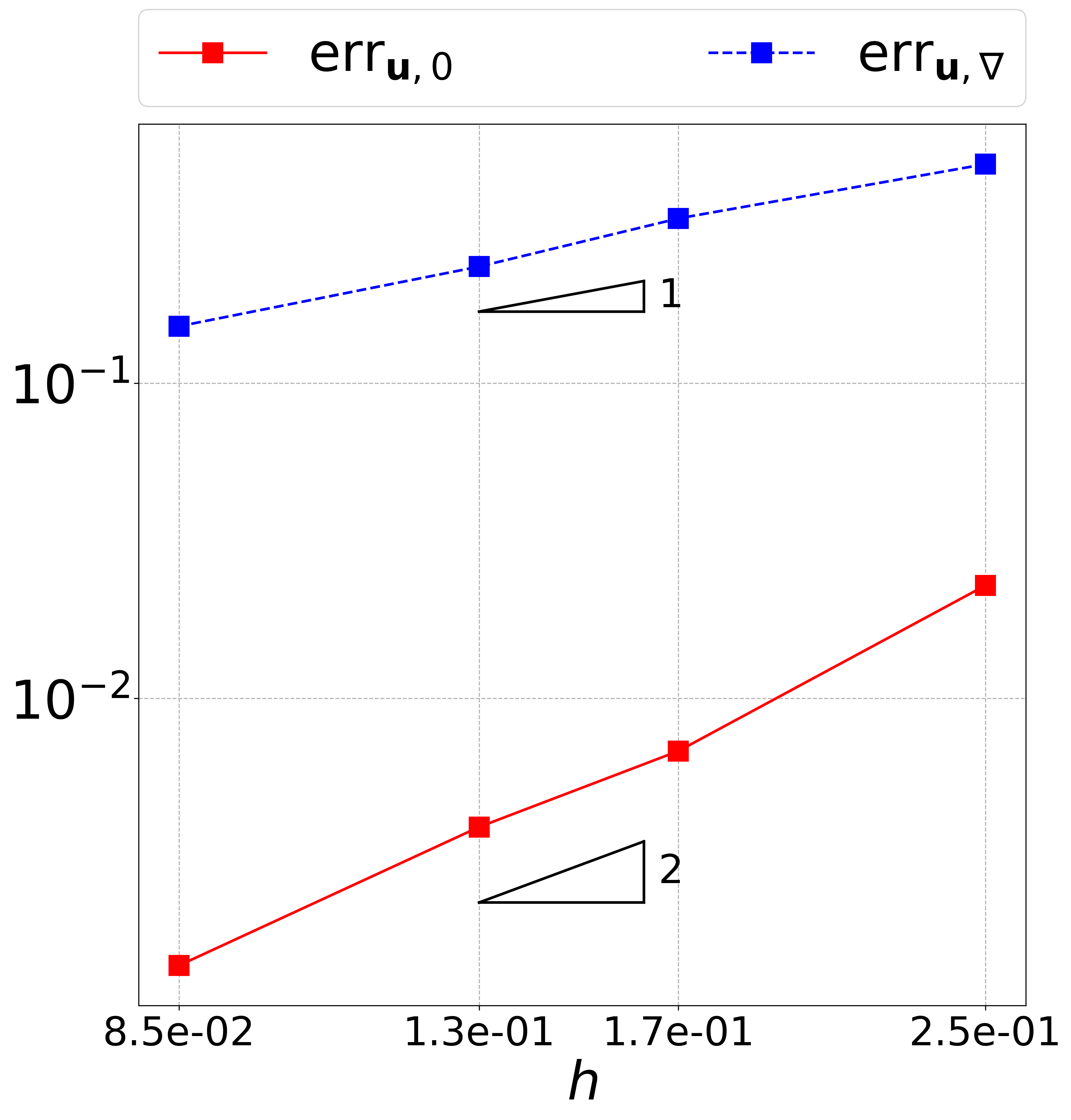}
        \caption{}
    \end{subfigure}
    \begin{subfigure}{0.35\textwidth}
        \includegraphics[width=1\linewidth]{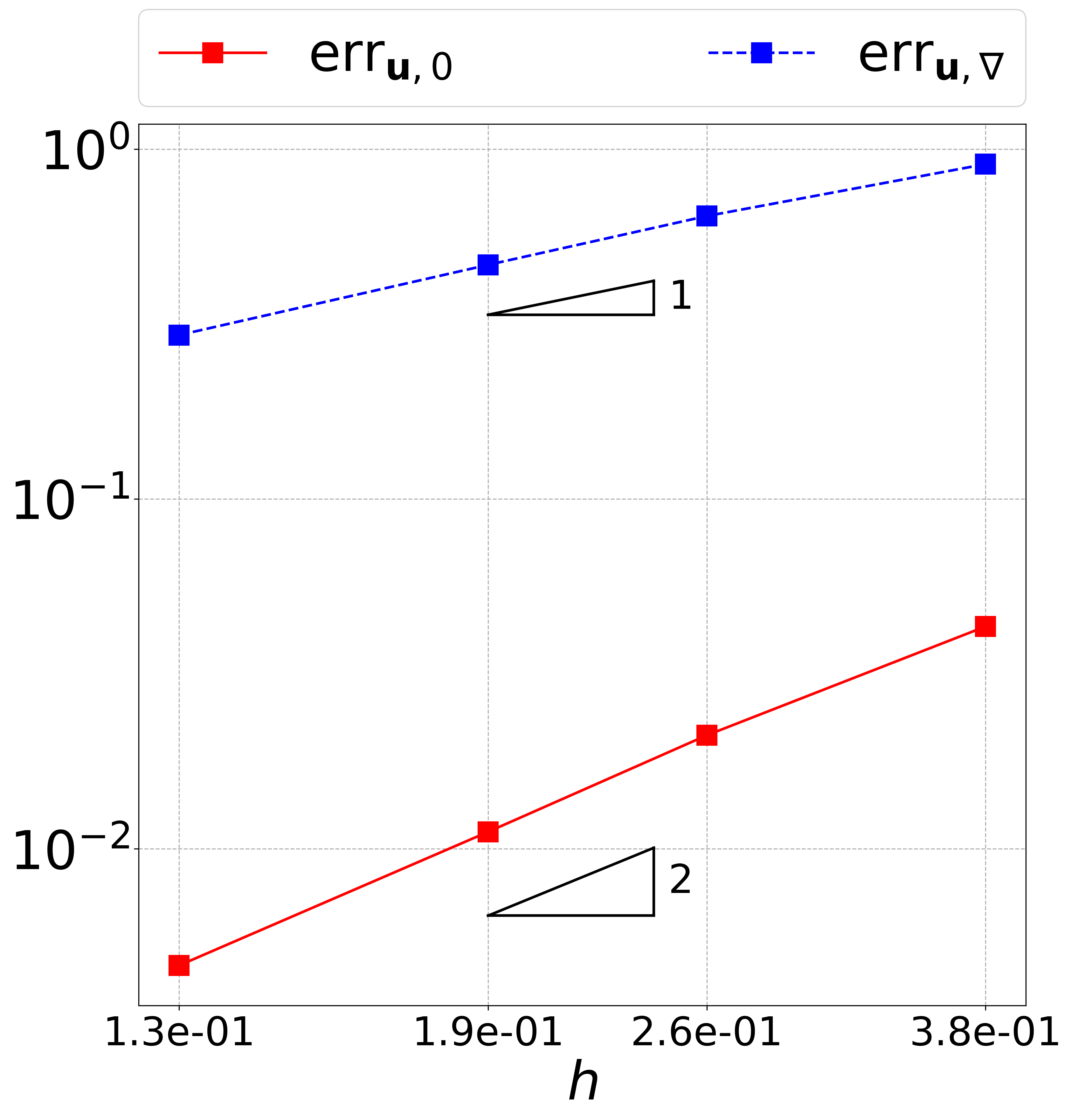}
        \caption{}
    \end{subfigure}
    \caption{Test 2: Behaviour of errors \eqref{eq:elastic:errors} as $h$ decreases. Left: Non-uniform quadrilateral. Right: Voronoi.}
    \label{fig:test2:errors}
\end{figure}

To assess the performance of the method introduced in Section \ref{sec:full-discrete-problem}, we evaluate the errors in the displacement field $\uu$ with respect to the exact solution. In particular, we consider the discrete $\leb{2}{}$- and $\sob{1}{}$-errors defined as
\begin{equation}
    \mathrm{err}_{\uu,0}^2 = \sum_{E \in \Th} \norm[\leb{2}{E}]{\Proj{1}{E} \uu_h - \uu}^2 \quad \mathrm{err}_{\uu,\nabla}^2 = \sum_{E \in \Th} \norm[\leb{2}{E}]{\nabla \Proj[\nabla]{1}{E} \uu_h - \nabla \uu}^2,
    \label{eq:elastic:errors}
\end{equation}
and study their behavior as the mesh size $h$ decreases for each of the considered mesh families: the non-uniform quadrilateral and the Voronoi families, shown in Figure \ref{fig:test1-2:mesh}.

The computed errors are reported in Figure \ref{fig:test2:errors}. Numerical results confirm that the tested method presents the expected convergence rates. More precisely, the $\sob{1}{}$-error exhibits a linear convergence rate, while the $\leb{2}{}$-error converges with quadratic accuracy, respectively. These optimal polynomial rates are achieved for both the considered families of meshes, demonstrating that the method is robust with respect to the choice of the meshes, regardless of mesh non-uniformity.

\subsection{Test 3: An infiltration process}

This test mimics the numerical example ``4.1'' in \cite{Gatti2024} and aims to assess the numerical performance of Nitsche's method when considering an infiltration process. 

The initial condition is set equal to the hydrostatic pressure $\ph_0(x, z) = - z$ and the hydraulic conductivity and the water content are here expressed by the Mualem-van Genuchten laws \eqref{eq:van:parameters} with parameters given in Table \ref{tab:test3:par}.

\begin{table}[!ht]
\centering
\caption{Test 3: Soil model parameters for the Mualem-van Genuchten laws \eqref{eq:van:parameters}.}
\label{tab:test3:par}
\begin{tabular}{@{}clll@{}}
\toprule
\textbf{Symbol} & \textbf{Parameter Name}          & \multicolumn{1}{c}{\textbf{Units}} & \multicolumn{1}{c}{\textbf{Value}} \\ \midrule
$\wc_s$         & Saturated water content           & $-$                                & 0.4                              \\
$\wc_r$         & Residual water content           & $-$                                & 0.04                              \\
$K_s$           & Saturated hydraulic conductivity & $\rm{m}\ \rm{s}^{-1}$                          & 1.0e-6                            \\
$\alpha$        & Mualem-van Genuchten fitting parameter  & $\rm{m}^{-1}$                           & 0.2                               \\
$n$             & Mualem-van Genuchten fitting parameter  & $-$                                & 1.5                              \\
\bottomrule
\end{tabular}
\end{table}

Let us consider a simple rectangular domain $\Omega = (0, 0.1) \times (0, 5)\ \rm{m}^2$. We set homogeneous Dirichlet boundary conditions at the bottom of the domain and we apply no-flux boundary conditions on the lateral boundary. 
At the top boundary, we impose a constant and uniform rainfall condition. The precipitation is assumed here to be aligned with the vertical direction, i.e.
\begin{equation}
    \bm{G}_N = - G_N \ee_z,
    \label{test3:neuman_condition}
\end{equation}
where $G_N$, as expressed in Section \ref{sec:seepage}, represents the actual rainfall rate.

In the following, we consider three different rainfall scenarios by changing the value of the ratio between rainfall and saturated permeability $G_N / K_s$,  namely 
\begin{enumerate}[label=\textbf{Case} \arabic*]
    \item \label{test3:case1}$G_N / K_s = 0.1 < 1$. The final time is set to $\finaltime = 100 \rm{h}$.
    \item \label{test3:case2}$G_N / K_s = 1$. The final time is set to $\finaltime = 50 \rm{h}$.
    \item \label{test3:case3}$G_N / K_s = 10 > 1$. The final time is set to $\finaltime = 50 \rm{h}$.
\end{enumerate}

Concerning the numerical discretization, for each tested case, we choose a uniform Cartesian mesh with edge length equal to $0.05 \rm{m}$ and a uniform time discretization with $N_{\finaltime} = 10$.

\begin{figure}[!ht]
    \centering
    \includegraphics[width=0.5\linewidth]{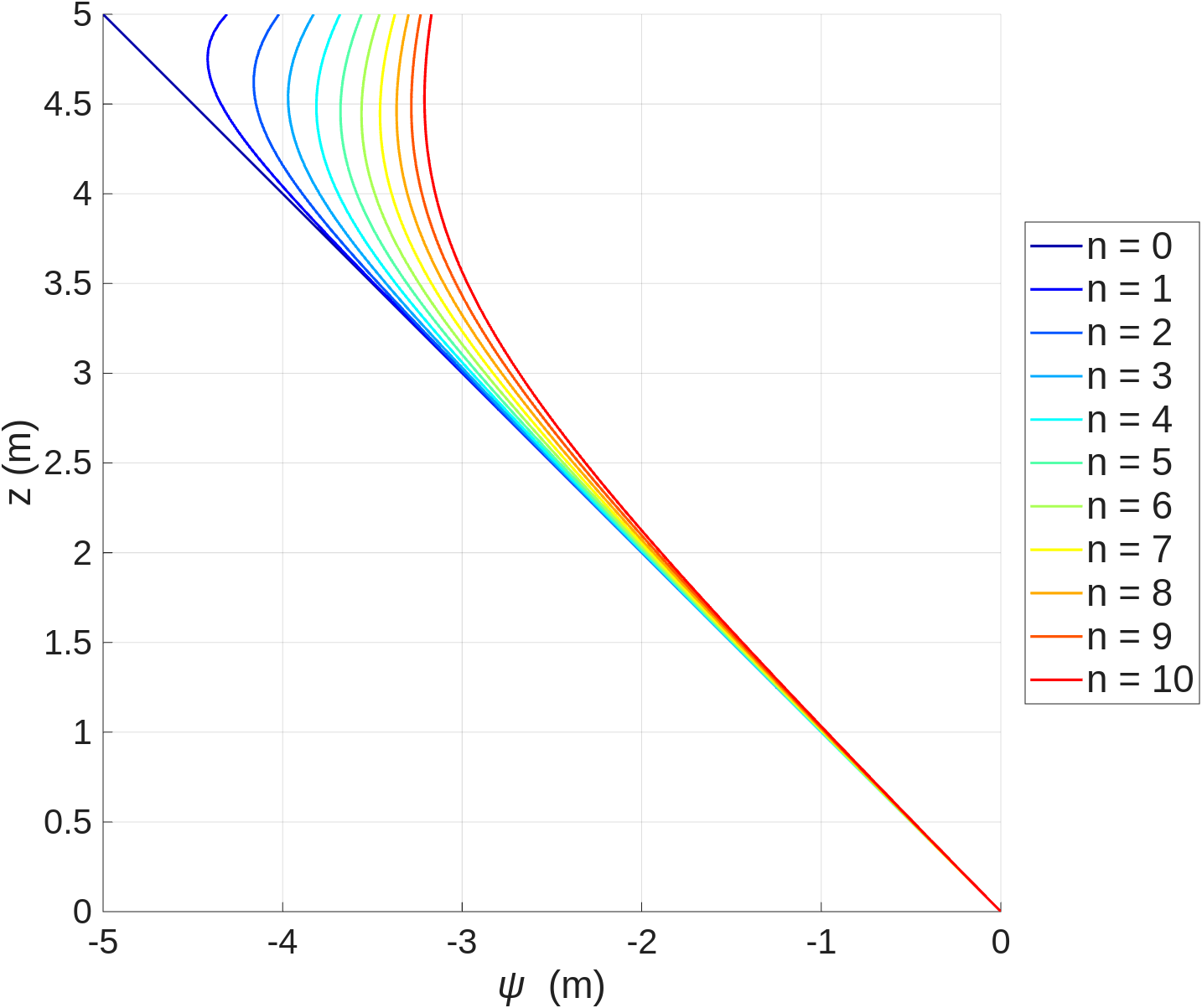}
    \caption{Test 3: Numerical solution extracted along the vertical line $x = 0.05$ for each $n = 0, \dots, N_{\finaltime}= 10$. \ref{test3:case1}.}
    \label{fig:test3:case1}
\end{figure}

\begin{figure}[!ht]
    \centering
    \includegraphics[width=0.5\linewidth]{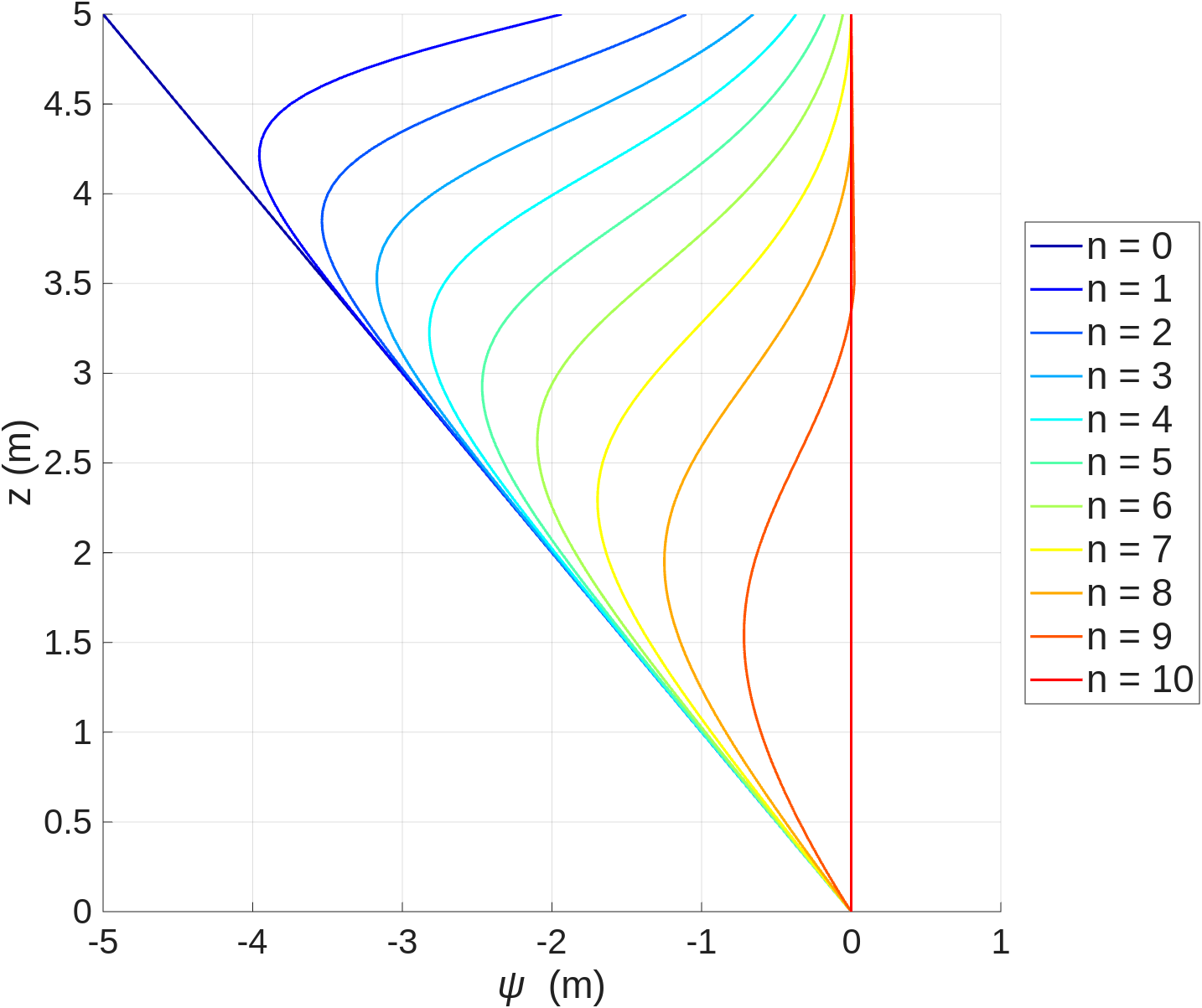}
    \caption{Test 3: Numerical solution extracted along the vertical line $x = 0.05$ for each $n = 0, \dots, N_{\finaltime} = 10$. \ref{test3:case2}.}
    \label{fig:test3:case2}
\end{figure}

\begin{figure}[!ht]
    \centering
    \includegraphics[width=0.5\linewidth]{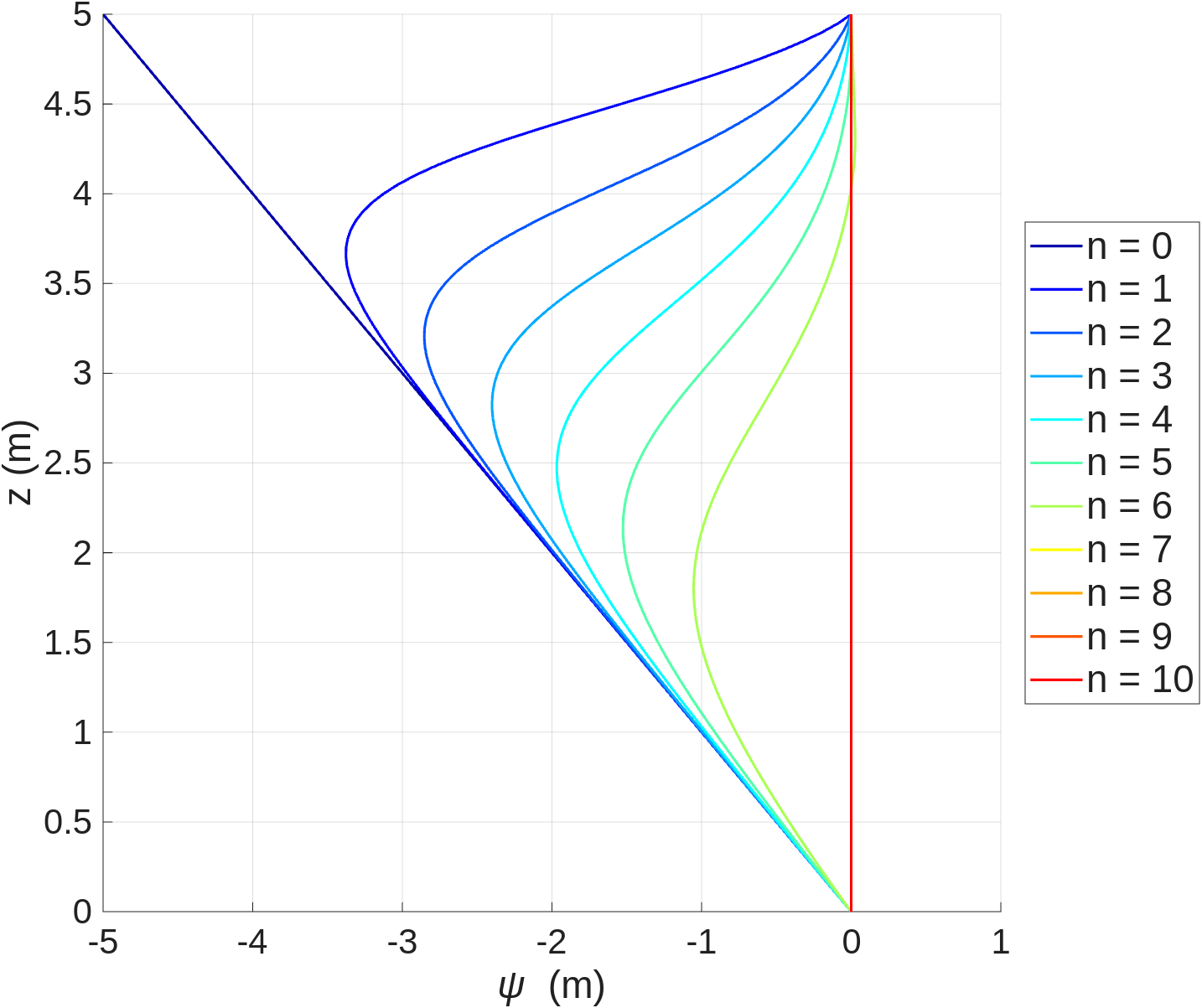}
    \caption{Test 3: Numerical solution extracted along the vertical line $x = 0.05$ for each $n = 0, \dots, N_{\finaltime} = 10$. \ref{test3:case3}.}
    \label{fig:test3:case3}
\end{figure}

The case \ref{test3:case1} is characterized by a rainfall-to-saturated-permeability ratio smaller than $1.0$, indicating that the rainfall intensity is insufficient to saturate the soil surface. Figure \ref{fig:test3:case1} shows the numerical solution extracted along the vertical line $x = 0.05$ for each time step $n = 0, \dots, N_{\finaltime}$. As observed in the figure, the solution never reaches saturation at the top boundary ($z = 5\ \rm{m}$). Consequently, Nitsche's method consistently enforces the Neumann boundary condition \eqref{test3:neuman_condition}, independently of the values of $s_{\mathrm{ph}}$ and $s_{\mathrm{fl}}$ defined in \eqref{eq:nitshce:ratio_terms}.

The numerical solution corresponding to case \ref{test3:case2} is reported in Figure \ref{fig:test3:case2}. Unlike the previous scenario, Nitsche's method switches from the Neumann boundary condition \eqref{test3:neuman_condition} to the homogeneous Dirichlet condition, $\ph = 0$, at time step $n = 7$, when the top boundary becomes saturated. At the final simulation time, the soil is fully saturated. In this test case, the parameters are set to $s_{\mathrm{ph}} = 0.001$ and $s_{\mathrm{fl}} = 0$. Immediately after the switching event, the Picard iteration terminates (after reaching the maximum allowed number of iterations $100$) with a residual of the order of $10^{-5}$, corresponding to the lowest accuracy attained during the simulation. By contrast, at both the beginning and the end of the simulation, the method achieves a higher accuracy in very few iterations, with residuals of approximately $10^{-7}$. 

The last case, \ref{test3:case3}, represents the most challenging scenario from a numerical standpoint, as the rainfall-to-saturated-permeability ratio exceeds $1.0$. In this regime, the rainfall intensity is greater than the soil infiltration capacity, causing the top boundary to saturate immediately. As a result, the switching from the Neumann to the homogeneous Dirichlet condition occurs during the first non-linear iteration, as illustrated by the numerical solution in Figure \ref{fig:test3:case3}. For every time step $n$, the Picard iteration terminates with a residual of the order of $10^{-7}$ in a few iterations and reaches machine precision toward the end of the simulation, when the solution converges to the trivial zero solution.

In all the test cases considered, the penalty parameter is fixed to $\penalt = 1$. We observe that the value of this parameter does not significantly affect the overall behaviour of the simulations in this numerical experiment.

\subsection{Test 4: The hydro-mechanical model}

In this experiment, we assess the performance of the method when solving the semi-coupled hydro-mechanical problem of soil stability. For this purpose, we consider a benchmark problem dealing with rainfall on a hill-slope, proposed in \cite{Lu2012, Moradi2018, Abbasov2024}.

In this numerical experiment, the body force vector $\bb = \bb(\ph)$ in the linear momentum equilibrium \eqref{eq:momentum_balance} depends on the water pressure-head and is expressed by
\begin{equation}
   \bb(\ph) = -\left(\rho_s (1 - \wc_s) + \rho_w \wc(\ph) \right) g \ee_z,
\end{equation}
where $\rho_s\ [M L^{-3}]$ represents the density of soil grains.  

\begin{figure}[!ht]
    \centering
    \includegraphics[width=0.7\linewidth]{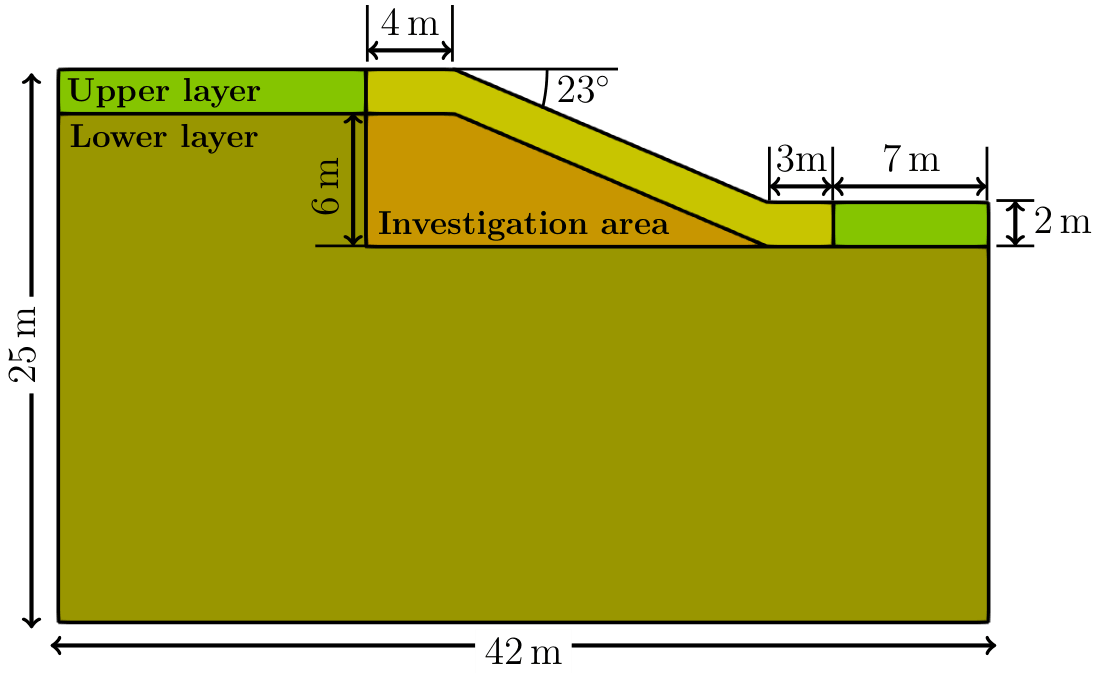}
    \caption{Test 4: Computational domain and investigation area.}
    \label{fig:test4:domain}
\end{figure}

We consider a two-layered domain that extends over $42\rm{m}$ in length and $25 \rm{m}$ in depth with a $23$-degree slope in the middle. Nonetheless, the risk of a landslide is analysed in a smaller investigation area located in the middle. Indeed, the simulations are performed for the whole domain to reduce the effect of the boundary conditions on stability assessments. The computational domain, along with all the relevant information, is reported in Figure \ref{fig:test4:domain}. The point $(0,0)$ is set at the bottom left corner of the domain.

The upper layer is $2\rm{m}$ thick along the entire length of the domain. The two layers exhibit distinct hydraulic properties, described by the Brooks-Corey constitutive laws \eqref{eq:brooks:parameters}. The corresponding parameters are listed in Table \ref{tab:test4:paramaters}, along with the mechanical properties of the soil. 
\begin{table}[!ht]
\centering
\caption{Test 4: Hydro-mechanical parameters. Parameters reported with a single value are assumed to be the same for both the upper and lower layers.}
\label{tab:test4:paramaters}
\begin{tabular}{@{}cllll@{}}
\toprule
\textbf{Symbol} & \textbf{Parameter Name}          & \multicolumn{1}{c}{\textbf{Units}} & \multicolumn{1}{c}{\textbf{Upper layer}} & \multicolumn{1}{c}{\textbf{Lower layer}} \\ \midrule
$\wc_s$         & Saturated water content           & $-$                                & 0.412 & 0.385                               \\
$\wc_r$         & Residual water content           & $-$                                & 0.041 & 0.09                             \\
$K_s$           & Saturated hydraulic conductivity & $\rm{m}\ \rm{s}^{-1}$                          & 7.2e-6 & 1.7e-7                           \\
$\alpha_{BC}$        & Brooks-Corey fitting parameter  & $\rm{m}^{-1}$                           & 6.8 & 2.7                               \\
$n_{BC}$             & Brooks-Corey fitting parameter  & $-$                                & 0.322 & 0.131                             \\ 
$l_{BC}$             & Brooks-Corey fitting parameter  & $-$                                & 1 &                          \\
$\rho_w$        & Density of water       & $\rm{kg}\ \rm{m}^{-3}$                         & 1000                     &            \\
$g$        & Acceleration of gravity      & $\rm{m}\ \rm{s}^{-2}$                         & 10                     &            \\
$\rho_s$        & Density of soil grains       & $\rm{kg}\ \rm{m}^{-3}$                         & 2636                    &             \\
$E$             & Young module                     & $\rm{kPa}$                              & 10000            &                     \\
$\nu$           & Poisson ratio                    & $-$                                & 0.35               &                \\
$\phi'$         & Friction angle                   & $\circ$                            & 35                    &             \\
$c'$            & Effective cohesion               & $\rm{kPa}$                              & 3 & 6                                 \\ \bottomrule
\end{tabular}
\end{table}

The simulation is performed for two periods of rainfall:
\begin{itemize}
    \item a first period $(-10\rm{years}, 0)$ of 10 years with a low-intensity rainfall of $600\ \rm{mm}\ \rm{year}^{-1}$, starting from the following hydrostatic pressure
    \begin{equation}
        \ph(-10\ {\rm{years}}, x, z) = \overline{\ph}(x,z) \coloneq 8 - z.
        \label{eq:test4:initialcondition}
    \end{equation}
    This period is simulated to predict the natural state of soil \cite{Abbasov2024}.
    \item a second period $(0, 15\rm{h})$, taking the final state of the previous period as initial condition, with a high-intensity rainfall event, i.e. $20\ \rm{mm}\ \rm{h}^{-1}$, during $15\rm{h}$.
\end{itemize}

\begin{figure}[!ht]
    \centering
    \includegraphics[width=0.8\linewidth]{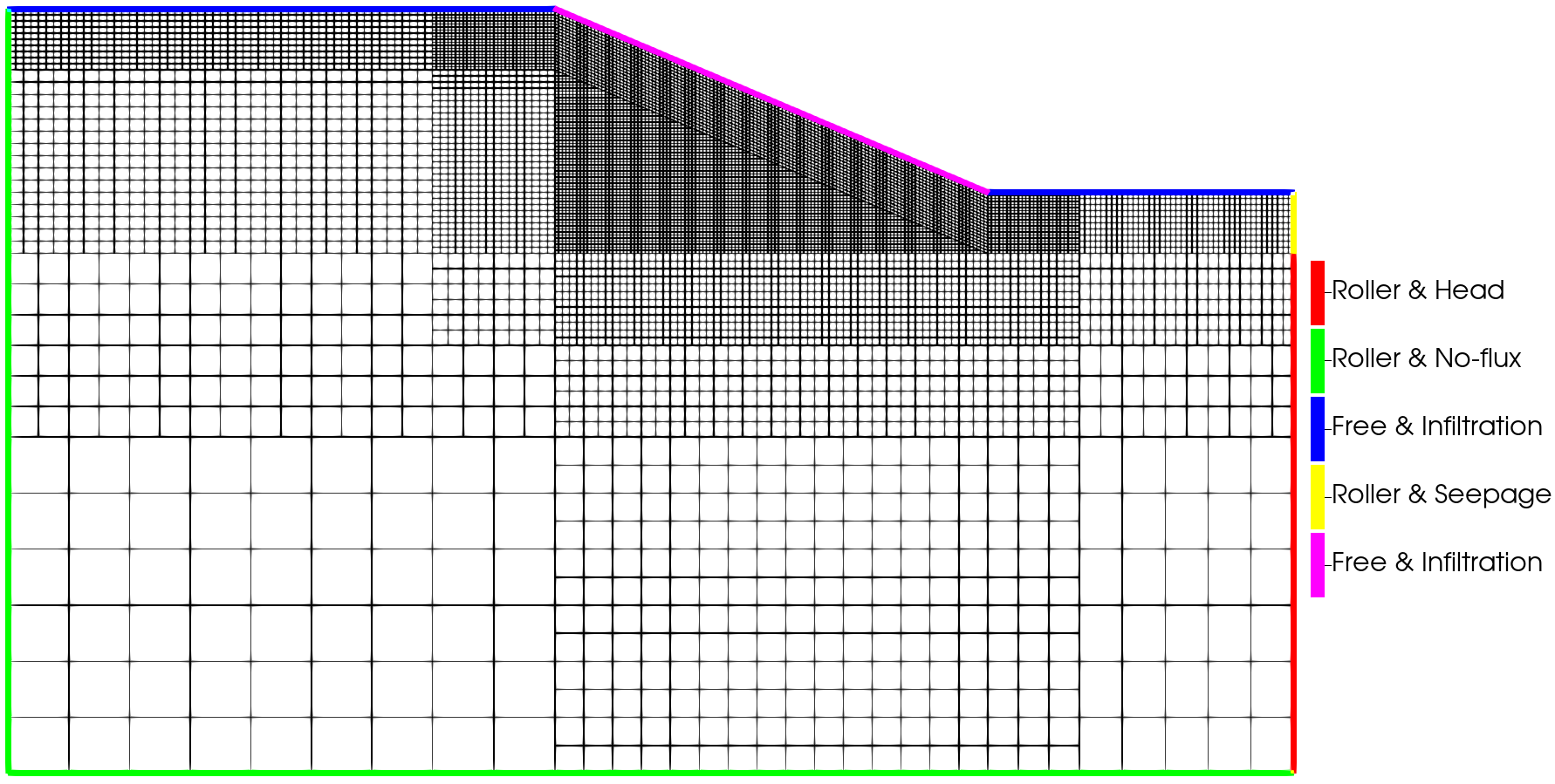}
    \caption{Test 4: Boundary conditions and computational mesh.}
    \label{fig:test4:bc}
\end{figure}

The imposed boundary conditions are summarized in Figure \ref{fig:test4:bc}. Concerning the Richards' equation, on the left and bottom surfaces, we impose no-flux boundary conditions, whereas we consider a rainfall flux on the top surface. We observe that the normal rain flux on the slope (highlighted in magenta in Figure \ref{fig:test4:bc}) is equal to the one on the horizontal surface, multiplied by the cosine of the inclination angle \cite{Abbasov2024}.  We set a fixed pressure-head equal to $\overline{\ph}$, defined in \eqref{eq:test4:initialcondition}, on the bottom part of the right border (highlighted in red in Figure \ref{fig:test4:bc}), whereas a seepage condition is imposed on the remaining part of this boundary. Finally, the top surface is considered free of stress, whereas roller boundary conditions are imposed at the bottom, right, and left surfaces.

The simulation domain was discretized using a graded mesh, with cell sizes increasing with depth (Figure \ref{fig:test4:bc}). A finer discretization was adopted near the ground surface to accurately capture the highly dynamic hydrological processes associated with rainfall infiltration, whereas a coarser mesh was employed in deeper layers, where hydrological conditions evolve more gradually. Thanks to the capability of the VEM framework to naturally handle hanging nodes, a conforming graded mesh can be easily constructed while preserving the layer subdivision and accurately fitting the geometry of the investigation domain \cite{Vicini2024}. A constant uniform time discretization was considered, with $N_{\finaltime}=1000$ time steps for the first simulation period and $N_{\finaltime}=2000$ time steps for the second period.

Figures \ref{fig:test4:water_content} and \ref{fig:test4:lfs} show the spatial distribution of water content and Local Factor of Safety values within the investigation area at three representative time instants. The first corresponds to $t=0\mathrm{h}$, marking the end of the first simulation period and the onset of the second. The second is $t=7.5\mathrm{h}$, representing the midpoint of the rainfall event simulated during the second period. The final snapshot is taken at $t=15\mathrm{h}$, corresponding to the end of the simulation.

During the first simulation period, rainfall progressively infiltrated the soil profile, penetrating the upper layer and redistributing into the lower layer. At the same time, water accumulated in the downslope region, near the toe of the upper layer. This configuration corresponds to a hydrological equilibrium state reached after approximately two years of simulation. The LFS distribution at $t=0\mathrm{h}$ indicates that this equilibrium condition does not exhibit any potential failure zones within the slope.

The effects of the intense rainfall event simulated during the second period are clearly visible in both the water content and LFS fields. After $7.5\mathrm{h}$ of rainfall, the top part of the soil begins to approach saturation. As the degree of saturation increases, a potential failure zone starts to develop near the ground surface, as highlighted by the reduction in LFS values. At the end of the simulation ($t=15\mathrm{h}$), the soil profile is almost completely saturated, and the potentially unstable region has expanded toward the lower part of the slope and near its toe. The predicted evolution of both the hydraulic and stability responses closely matches the results obtained using {COMSOL{\textregistered}} and reported in the reference study \cite{Abbasov2024}, confirming the viability of the proposed approach to accurately reproduce the semicoupled hydro-mechanical model for slope stability.

\begin{figure}[!ht]
    \centering
    \begin{subfigure}{0.9\textwidth}
        \includegraphics[width=1\linewidth]{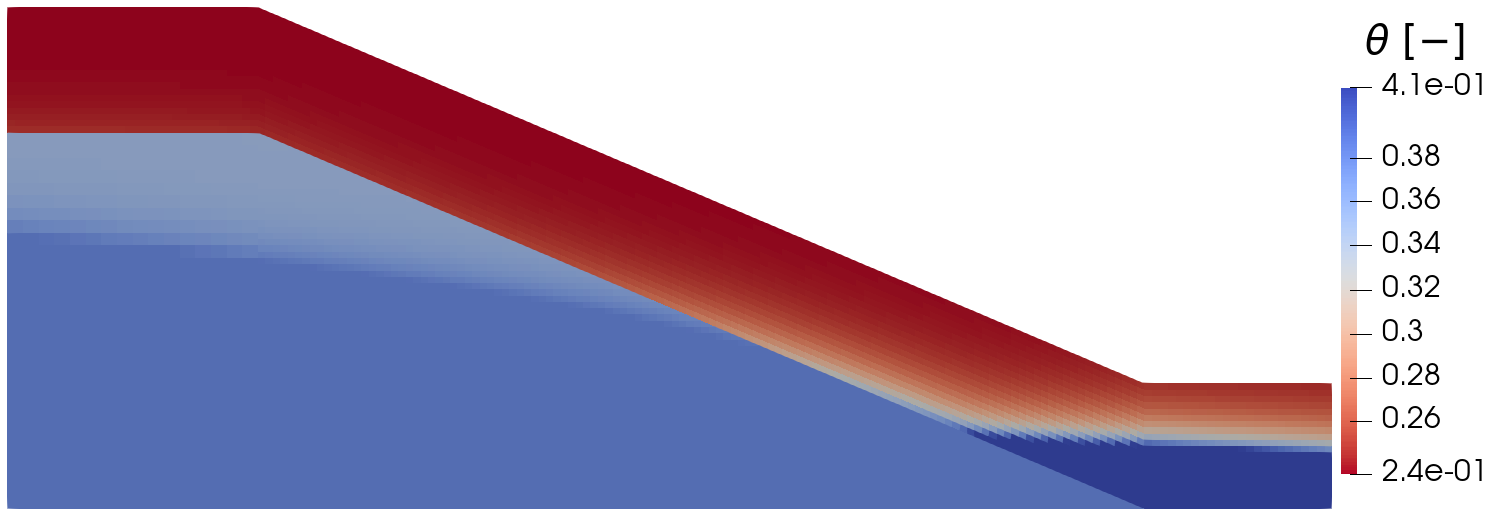}
        \caption{$t = 0\rm{h}$}
    \end{subfigure}
    \begin{subfigure}{0.9\textwidth}
        \includegraphics[width=1\linewidth]{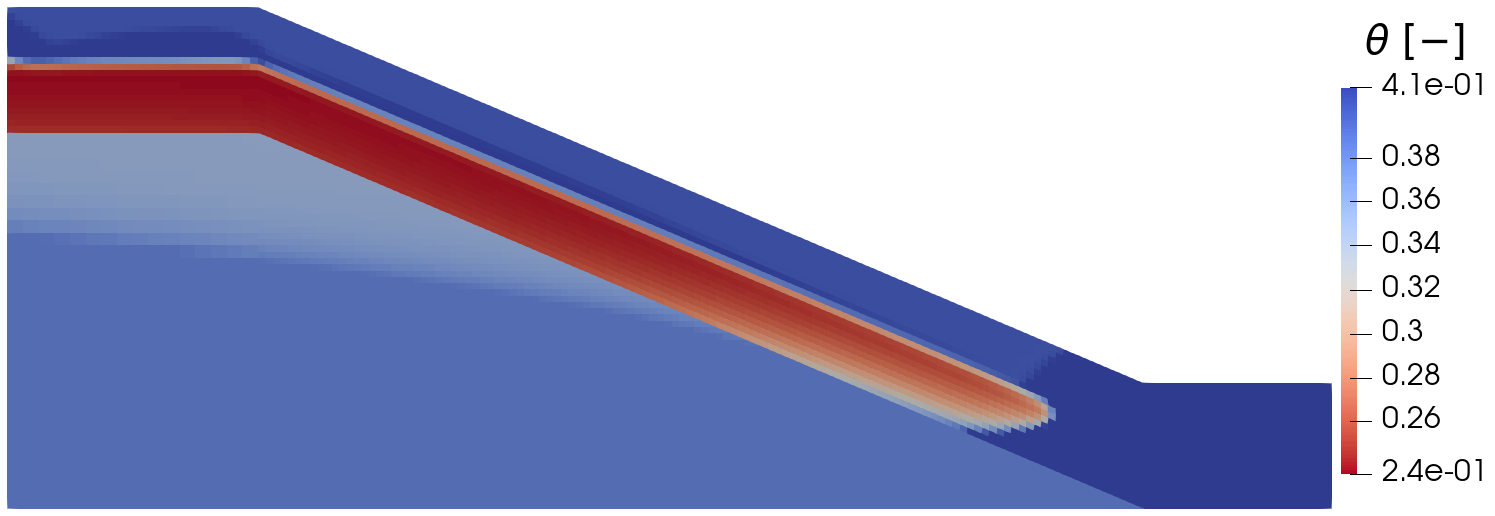}
        \caption{$t = 7.5\rm{h}$}
    \end{subfigure}
    \begin{subfigure}{0.9\textwidth}
        \includegraphics[width=1\linewidth]{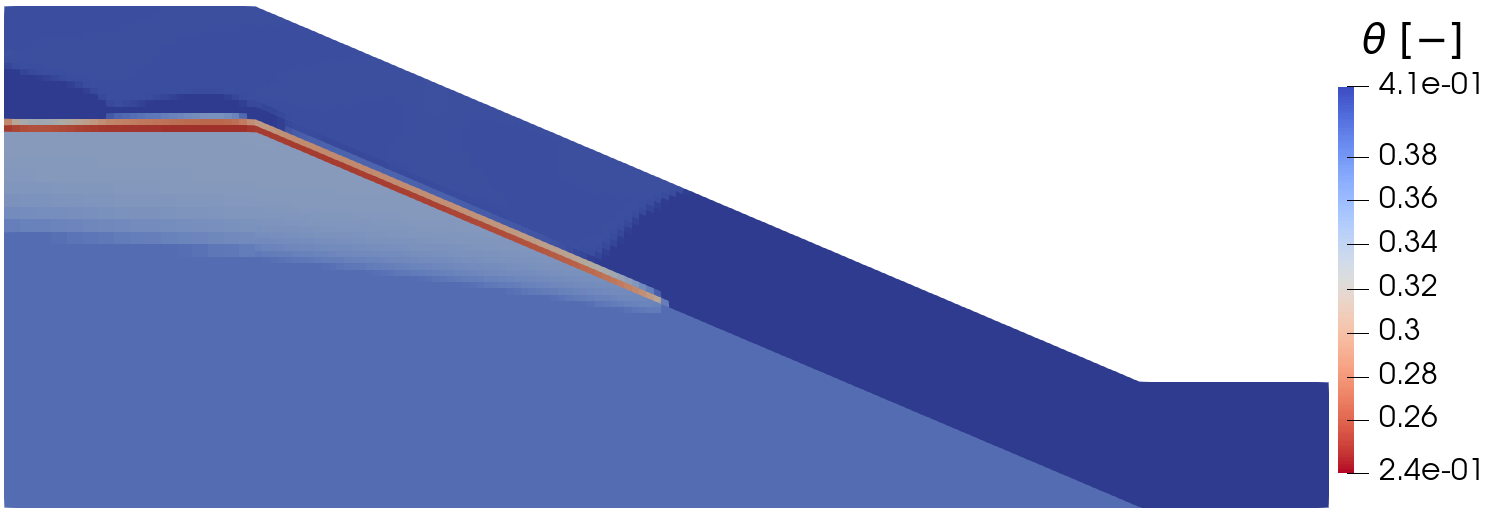}
        \caption{$t = 15\rm{h}$}
    \end{subfigure}
    \caption{Test 4: Discrete water content $\wc$ values within the investigation area at different time steps.}
    \label{fig:test4:water_content}
\end{figure}

\begin{figure}[!ht]
    \centering
    \begin{subfigure}{0.9\textwidth}
        \includegraphics[width=1\linewidth]{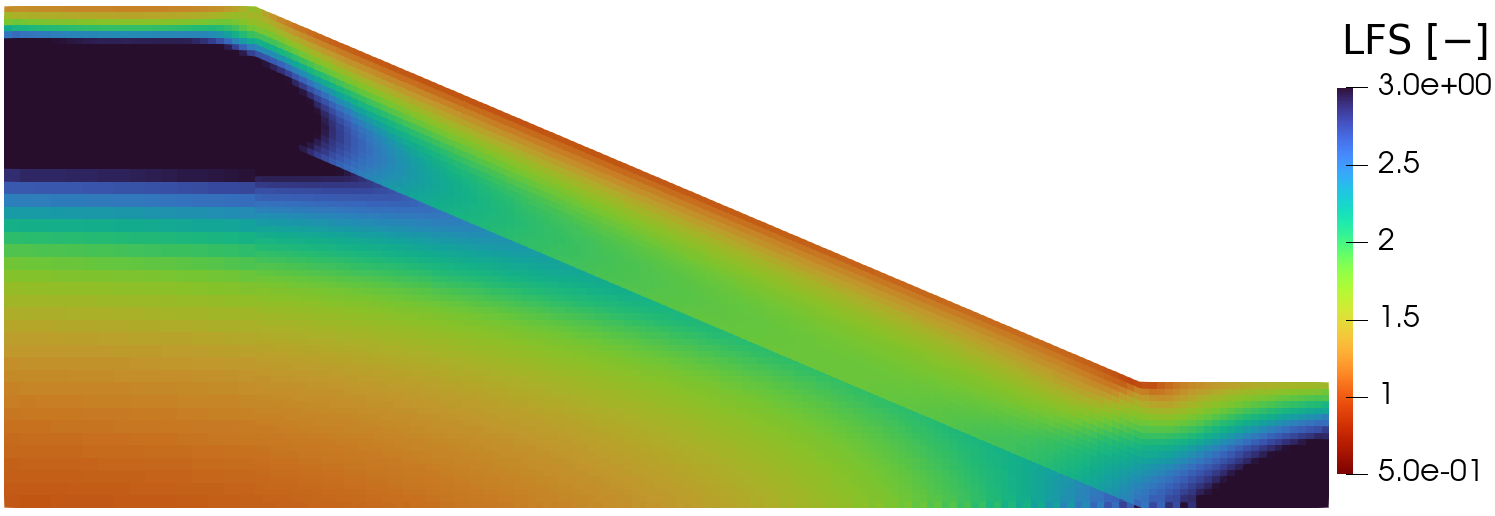}
        \caption{$t = 0\rm{h}$}
    \end{subfigure}
    \begin{subfigure}{0.9\textwidth}
        \includegraphics[width=1\linewidth]{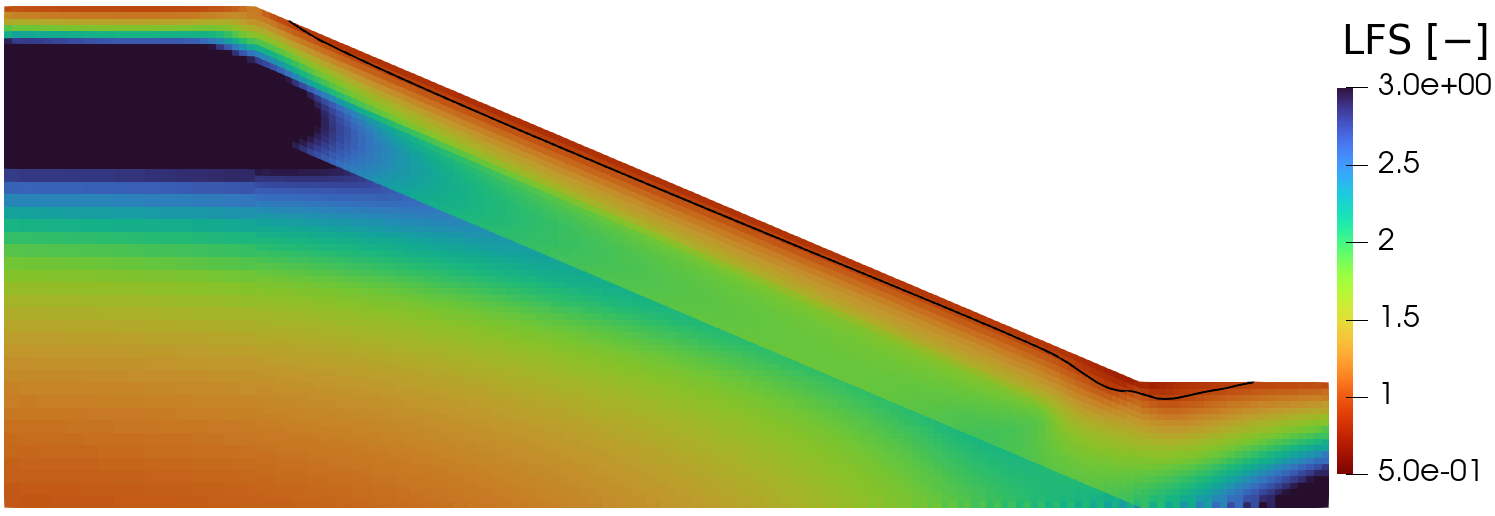}
        \caption{$t = 7.5\rm{h}$}
    \end{subfigure}
    \begin{subfigure}{0.9\textwidth}
        \includegraphics[width=1\linewidth]{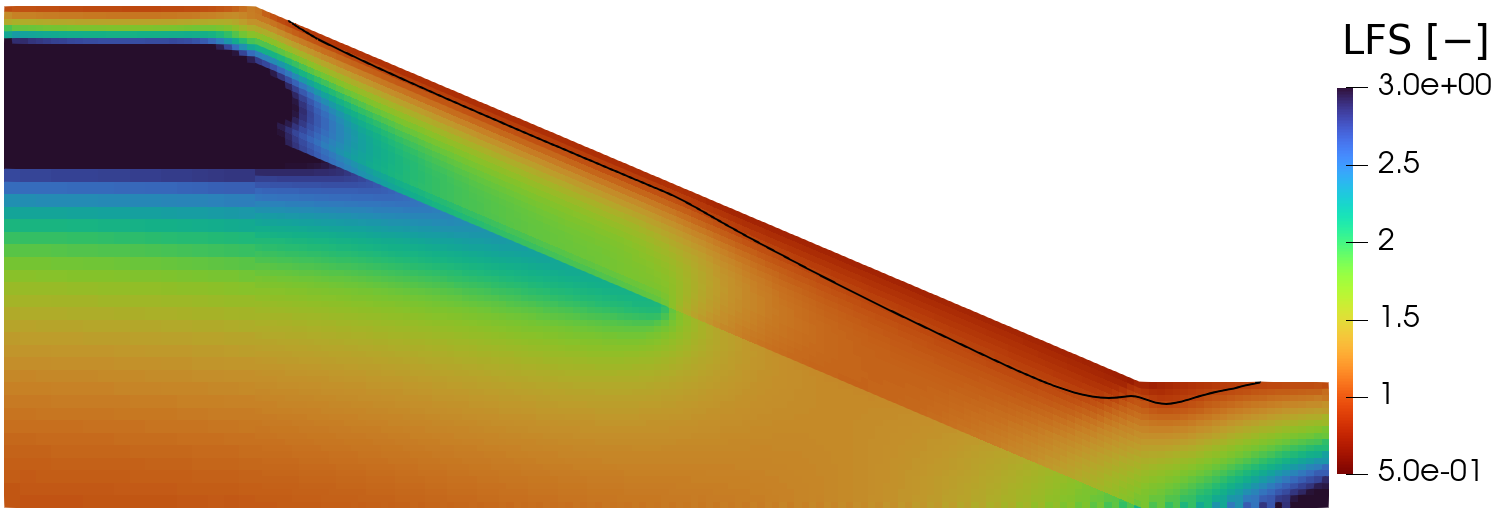}
        \caption{$t = 15\rm{h}$}
    \end{subfigure}
    \caption{Test 4: Discrete LFS values within the investigation area at different time steps. The black curve denotes the surface $\rm{LFS}=1$.}
    \label{fig:test4:lfs}
\end{figure}

\begin{remark}
    In this test case, to avoid overshoot/undershoot of the pressure-head in cells characterized by partial saturation, we discretize the permeability $K$ and the capacity term $C$ as element-wise constant in \eqref{eq:defah}-\eqref{eq:defmh}, i.e. $K_h = K(\Proj{0}{E} \ph_h)$ and $C_h = C(\Proj{0}{E} \ph_h)$.
\end{remark}

\section{Conclusion}

In this paper, we introduce the stabilization-free Virtual Element Method for the spatial discretization of the non-linear hydro-mechanical model that couples the Richards' equation with a linear elastic problem to assess soil stability. 
Seepage-face and infiltration boundary conditions are introduced into the model through Nitsche's method, allowing for the automatic transition between Neumann and Dirichlet boundary conditions according to the local hydraulic state. A theoretical analysis is established to show the stability of the resulting spatial discretization.

Moreover, the method is combined with a mass-lumping strategy, which eliminates the need for stabilization terms also in the storage contribution while mitigating spurious oscillations at the infiltration front. Time discretization is performed using the backward Euler scheme, whereas the non-linearities arising from Richards' equation are handled through a Picard iterative procedure.

Several benchmark experiments are simulated to show the performance of the proposed methodology and to demonstrate its viability and robustness in the simulation of semi-coupled hydro-mechanical problems.

\section*{Acknowledgements}

The author S.B. kindly acknowledges partial financial support provided by European Union through project Next Generation EU, M4C2, PRIN 2022 PNRR project P2022BH5CB\_001 ``Polyhedral Galerkin methods for engineering applications to improve disaster risk forecast and management: stabilization-free operator-preserving methods and optimal stabilization methods'', and by PNRR M4C2 project of CN00000013 National Centre for HPC, Big Data and Quantum Computing (HPC) (CUP: E13C22000990001). The authors F.M. and G.T. kindly acknowledge the financial support provided by INdAM-GNCS Project ``Metodi numerici politopali stabilization-free e neural-based per problemi accoppiati e non lineari'' (CUP: E53C25002010001). 

\bibliographystyle{IEEEtranDOI}
\bibliography{biblio.bib}

\end{document}